\renewcommand{\a}{\mathfrak a}
\newcommand{\cc}{\mathfrak c}
\newcommand{\Z}{\mathbb Z}
\newcommand{\Q}{\mathbb Q}
\newcommand{\R}{\mathbb R}
\newcommand{\C}{\mathbb C}
\newcommand{\bN}{\textbf{N}}
\renewcommand{\P}{\mathbb P}
\renewcommand{\to}{\rightarrow}
\newcommand{\longto}{\longrightarrow}
\newcommand*{\longinto}{\ensuremath{\lhook\joinrel\relbar\joinrel\rightarrow}}
\newcommand{\into}{\hookrightarrow}
\renewcommand{\o}{\mathcal O}
\newcommand{\calC}{\mathcal C}
\newcommand{\calI}{\mathcal I}
\newcommand{\calS}{\mathcal S}
\newcommand{\calU}{\mathcal U}
\newcommand{\calP}{\mathcal P}
\newcommand{\calN}{\mathcal N}
\newcommand{\calM}{\mathcal M}
\newcommand{\calX}{\mathcal X}
\renewcommand{\epsilon}{\varepsilon}
\newcommand{\Gal}{\operatorname{Gal}}
\newcommand{\Aut}{\operatorname{Aut}}
\newcommand{\cl}{\operatorname{Cl}}
\newcommand{\vol}{\operatorname{Vol}}
\newtheorem{thm}{Theorem}[section]
\newtheorem{lem}[thm]{Lemma}
\newtheorem{prop}[thm]{Proposition}
\newtheorem{cor}[thm]{Corollary}
\theoremstyle{definition}
\newtheorem{alg}{Algorithm}
\theoremstyle{remark}
\newtheorem{rem}[thm]{Remark}
\numberwithin{equation}{section}
\title{\small Computing points of bounded height in projective space over a number field}
\date{}
\author{David Krumm}
\address{Department of Mathematics \\
Claremont McKenna College \\
Claremont, CA 91711}
\email{dkrumm@cmc.edu}
\begin{document}
\begin{abstract} We construct an algorithm for solving the following problem: given a number field $K$, a positive integer $N$, and a positive real number $B$, determine all points in $\P^N(K)$ having relative height at most $B$. A theoretical analysis of the efficiency of the algorithm is provided, as well as sample computations showing how the algorithm performs in practice. Two variants of the method are described, and examples are given to compare their running times. In the case $N=1$ we compare our method to an earlier algorithm for enumerating elements of bounded height in number fields.
\end{abstract}
 
\maketitle

\section{Introduction} Let $K$ be a number field of degree $n$ over $\Q$ with ring of integers $\o_K$, and let $N$ be a positive integer. For any real number $B\ge 1$, define
\[\Omega(B)=\{P\in\P^N(K): H_K(P)\le B\},\]
where $H_K$ is the relative height function on the set $\P^N(K)$. In \cite{schanuel} Schanuel proved that there is a constant $c$, depending only on $N$ and on classical invariants of $K$, such that 
\[\#\Omega(B)\sim c\cdot B^{N+1}\;\;\; \text{as}\;\; B\to\infty.\] Thus, Schanuel's result provides a solution to the problem of estimating the number of points of bounded height in a projective space over $K$. In practice it can prove useful for various applications to have an algorithm corresponding to this counting problem, so that one can generate all points in $\Omega(B)$ for any given $B$. In the case $N=1$, algorithms of this type have been used to compute bases for Mordell-Weil groups of elliptic curves \cite{petho-schmitt}; to compute preperiodic points for quadratic polynomials \cite{jxd}; and to find examples of abelian surfaces with everywhere good reduction over quadratic fields \cite{dembele-kumar}. For larger values of $N$ an algorithm does not seem to exist in the literature; the purpose of this article is to develop an efficient algorithm that can be used for any value of $N$. One application of this more general algorithm would be to list points of small height on projective varieties. This can be useful, for instance, when computing rational points on curves. Let $C$ be a curve of genus $g\ge 2$ defined over $K$, and let $\mathcal J$ be its Jacobian variety. For the purpose of determining the set $C(K)$ of $K$-rational points on $C$ it is extremely useful to have a set of generators for the group $\mathcal J(K)$. A common way of producing generators is to search for points of small height on the associated Kummer variety $\mathcal K=\mathcal J/\{\pm 1\}$, which embeds in $\P^{2^g-1}$. By listing points of small height in $\P^{2^g-1}(K)$ one could therefore carry out an exhaustive search (within some bounds) for points on $\mathcal K$. Thus, one may hope to determine generators for the group $\mathcal J(K)$, and with additional work determine all points in $C(K)$. We refer the reader to \cite{stoll_height_const, stoll_height_const2, stoll_genus3}, where this technique is made very explicit for hyperelliptic curves of genera 2 and 3.
 
A natural first approach to the problem of computing $\Omega(B)$ is to turn Schanuel's counting argument into an algorithm. Unfortunately, it is not clear that this solution can be implemented in practice. Schanuel reduces the counting problem to a question of estimating the number of lattice points inside a certain region in $\R^{n(N+1)}$ (see Schanuel's paper for details, or \cite[Chap. 3, \S5]{lang_dg} for a sketch of the argument). While there are methods for computing lattice points inside bounded subsets of Euclidean space, these subsets must be relatively simple, and the dimension of the ambient space should be kept as small as possible. The region that occurs in Schanuel's paper is somewhat complicated (indeed, a substantial portion of the paper is spent on proving that the region is bounded and has a sufficiently smooth boundary), making it difficult to construct this region in a computer and determine all lattice points inside it. 

The method developed in the present article also involves a computation of lattice points in a bounded region; however, the ambient space can be taken to be either $\R^n$ or $\R^r$, where $r$ is the rank of the unit group of $\o_K$, and the bounded region is in both cases a polytope --- see Algorithms \ref{M1_alg} and \ref{M2_alg} below. The cost of this simplification of the problem is not large: if we measure the efficiency of the method by comparing the size of $\Omega(B)$ to the size of the search space $\calS(B)$ (that is, the set of all points generated by the algorithm while searching for points in $\Omega(B)$), then the inefficiency of the algorithm is bounded above by a constant as $B$ tends to infinity.

\begin{thm}\label{intro_thm}There is a constant $k$, depending on $N$, $K$, and a choice of fundamental units in $K$, such that 
\[\limsup_{B\to\infty}\frac{\#\calS(B)}{\#\Omega(B)}\le k.\]
\end{thm}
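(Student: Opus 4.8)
The plan is to estimate the numerator and the denominator of the ratio separately. Schanuel's theorem \cite{schanuel} already supplies the denominator: $\#\Omega(B)\sim c\cdot B^{N+1}$ for a positive constant $c$ depending only on $N$ and $K$. Consequently it suffices to establish the upper bound $\#\calS(B)=O(B^{N+1})$; once this is known, any constant exceeding $c^{-1}\limsup_{B}\#\calS(B)/B^{N+1}$ serves as $k$. So the whole problem reduces to bounding the size of the search space.

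To do that I would first unwind the definition of $\calS(B)$ from Algorithms \ref{M1_alg} and \ref{M2_alg}. By construction the search ranges over a finite index set --- a representative $\a$ for each ideal class, together with a bounded amount of normalization data (a choice of pivot coordinate, sign conditions, and a fundamental domain for the unit action determined by the chosen fundamental units) --- and for each index it enumerates the points of a fixed full-rank lattice $\Lambda\subset\R^n$ (respectively $\subset\R^r$) lying in a polytope $P(B)$ whose inequalities encode the bound $H_K\le B$; thus $\#\calS(B)=\sum \#\big(\Lambda\cap P(B)\big)$, a finite sum. Each summand is at most $\vol(P(B))/\operatorname{covol}(\Lambda)$ plus a contribution from lower-dimensional faces (here working with a polytope rather than Schanuel's curved region pays off: the count is dominated by a single polynomial in the dilation parameters, with no delicate boundary analysis), so it remains to check $\vol(P(B))=O(B^{N+1})$. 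Translating the polytope's inequalities back through the Minkowski and logarithmic embeddings, this volume estimate has essentially the same shape as the one in Schanuel's argument, but only a crude upper bound for a piecewise-linear region is needed. The formulation I would actually use to organize the estimate is the following reinterpretation: up to a bounded multiplicity coming from the finite index set, $\calS(B)$ is in bijection with the set of pairs $(P,\mathfrak e)$ where $P\in\Omega(B)$ and $\mathfrak e$ is an integral ideal with $N(\mathfrak e)\le B/H_K(P)$ (the tuple corresponds to a point $P$ rescaled so that its coordinate ideal becomes $\a\mathfrak e$). Since the number of integral ideals of norm at most $X$ is $O(X)$, we get $\#\calS(B)\ll\sum_{P\in\Omega(B)}B/H_K(P)=B\sum_{P\in\Omega(B)}1/H_K(P)$, and partial summation against $\#\Omega(t)\sim ct^{N+1}$ gives $\sum_{P\in\Omega(B)}1/H_K(P)=O(B^{N})$, hence $\#\calS(B)=O(B^{N+1})$. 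Combining with the lower bound for $\#\Omega(B)$ completes the proof.

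The main obstacle I anticipate is not the analytic estimate but the bookkeeping linking the algorithm's actual enumeration to this clean description. One must determine precisely which coordinate tuples $(x_0,\dots,x_N)$ get tested --- in particular that the algorithm also produces tuples whose coordinate ideal is a proper multiple $\a\mathfrak e$ of the expected ideal $\a$, and that, summed over all choices of class representative, these are enumerated with the right multiplicity, namely $O(B/H_K(P))$ copies of each $P$. (These extra tuples still represent points of $\Omega(B)$, which is exactly why $\#\calS(B)$ has the same order of magnitude as $\#\Omega(B)$, rather than being asymptotically negligible or dominant.) One must also confirm that the error terms --- both in the lattice-point count and in the estimate for the number of integral ideals of bounded norm --- remain $O(B^{N+1})$ after summation over $P$; these cannot be expected to be of lower order, but since the theorem asks only for an upper bound on the $\limsup$, a crude bound on them is all that is needed.
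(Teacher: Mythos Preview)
Your proposal reaches the right conclusion but by a more elaborate route than the paper, and the first half of your sketch mislocates where the exponent $N+1$ enters.

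The paper's argument is essentially two lines once $\calS(B)$ is unpacked. By definition $\calS(B)=\bigcup_i\calS(\a_i,B)$, where $\calS(\a,B)$ consists of the projective points $[x_0,\ldots,x_N]$ with every $x_i\in\calC(\a,B)$; hence trivially $\#\calS(\a,B)\le(\#\calC(\a,B))^{N+1}$. So all that is needed is $\#\calC(\a,B)=O(B)$, and this follows from a \emph{single} lattice-point count in $\R^n$: dropping the norm condition, $\calC(\a,B)\subseteq\widetilde\calC(\a,B)=\Phi(\a)\cap\calP(\a,B)$, the intersection of a full-rank lattice of covolume proportional to $N(\a)$ with a box of volume proportional to $N(\a)\cdot B$, so $\#\widetilde\calC(\a,B)=cB+O(B^{1-1/n})$ with $c$ independent of $\a$ (Proposition~\ref{ideal_lattice_count}). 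Raising to the $(N+1)$-st power, dividing by Schanuel's asymptotic for $\#\Omega(\a,B)$, and summing over the $h$ ideal classes gives the explicit constant $k=hC$ with $C$ as in~\eqref{C_const}. No partial summation, no ideal-counting function, no decomposition into pairs $(P,\mathfrak e)$ is used.

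In your first sketch you write $\#\calS(B)=\sum\#(\Lambda\cap P(B))$ with $\Lambda\subset\R^n$ and then ask for $\vol P(B)=O(B^{N+1})$; but the polytope the algorithm actually enumerates lives in $\R^n$ (or $\R^r$) and has volume $O(B)$, not $O(B^{N+1})$. The factor $B^{N+1}$ appears only when one forms $(N+1)$-tuples from $\calC(\a,B)$, which is not itself a lattice enumeration. Your second formulation --- injecting $\calS(\a,B)$ into pairs $(P,\mathfrak e)$ where $\a\mathfrak e$ is the coordinate ideal of a chosen representative tuple, bounding the number of such pairs by $\sum_P O(B/H_K(P))$, and applying Abel summation against Schanuel's asymptotic --- is correct and does yield $\#\calS(B)=O(B^{N+1})$. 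It buys a conceptual explanation of where the overcount comes from (precisely the extra ideal factor $\mathfrak e$), at the cost of importing the ideal-counting function and a summation argument that the paper avoids entirely. If you pursue it, note that you only need an injection into pairs, not the bijection you claim; and the constant $\exp(\sum_v D_v)$ in the bound $H_K(P)\le B\exp(\sum_v D_v)$ coming from the definition of $\calP(\a,B)$ must be tracked, so the Abel summation should run over $\Omega(B\exp(\sum_v D_v))$ rather than $\Omega(B)$.
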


In the particular case $N=1$, where other methods already exist, our asymptotic bounds on the size of the search space compare favorably to those of the other methods: the algorithm of Peth\H{o} and Schmitt \cite{petho-schmitt} produces a search space that is larger than $\Omega(B)$ by a factor of $B^{2n-2}$; the algorithm given in \cite{doyle-krumm} improves this to a factor of $(\log B)^r$. For the algorithm developed here, this factor is a constant independent of $B$.

In addition to the efficiency of our algorithm there is another salient feature to point out. Given a height bound $B$, the algorithm computes a subset $\calC(B)\subset\o_K$ (defined in \S\ref{searchspace_section}) from which homogeneous coordinates of all points in $\Omega(B)$ can be taken. The set $\calC(B)$ depends on $B, K$, and a choice of fundamental units in $K$, but is independent of the dimension of the ambient space $\P^N$. Thus, by taking tuples of elements of $\calC(B)$ of appropriate length, one can determine all $K$-rational points of height bounded by $B$ in any projective space over $K$.

This article is organized as follows. In \S\ref{background_section} we set notation and record the theoretical results that are used in developing our algorithm. The main results of the paper are in \S\ref{bdd_class_section}, where we use the ideal class group of $K$ to partition the set $\Omega(B)$ into subsets, and show how to reduce the computation of these subsets to a problem of finding lattice points in polytopes. In \S\ref{searchspace_section} we discuss ways of improving the efficiency of our method. The special case where $K$ has only finitely many units is treated separately in \S\ref{rank0_section}. A proof of Theorem \ref{intro_thm} is given in \S\ref{efficiency_section}, and specific computations illustrating the performance of the algorithm are given in \S\ref{computations_section}.

\section{Background and notation}\label{background_section}

\subsection{Definition and computation of the relative height function}\label{notation_section}

Let $M_K$ denote the set of nontrivial places of $K$. To every place $v\in M_K$ there corresponds an absolute value $|\;|_v$ on $K$ extending one of the standard absolute values on $\Q$. We denote by $K_v$ the completion of $K$ with respect to $|\;|_v$, and by $\sigma_v$ the natural inclusion $K\into K_v$. Restricting $|\;|_v$ to $\Q$ and completing, we obtain a field $\Q_v$ which embeds into $K_v$ yielding a finite extension $K_v/\Q_v$; the \textit{local degree} of $K$ at $v$ is the degree $n_v$ of this extension. Defining a function $\|\;\|_v$ on $K$ by $\|x\|_v=|x|_v^{n_v}$, we have the following product formula for every $x\in K^{\ast}$:
\[\prod_{v\in M_K}\|x\|_v=1.\] 

For a point $P=[x_0,\ldots, x_N]\in\P^N(K)$, the {\it height} of $P$ (relative to $K$) is given by 
\[
H_K(P)= \prod_{v\in M_K}\max\{\|x_0\|_v,\ldots, \|x_N\|_v\}.
\]
The product formula ensures that the height of $P$ is independent of the choice of homogeneous coordinates for $P$. For purposes of explicit computations with absolute values and heights, a more concrete description of the places of $K$ and of the function $H_K$ will be needed.

Let $M_K^{\infty}$ denote the set of places $v$ for which $|\;|_v$ is Archimedean. For convenience, we will often write $v|\infty$ instead of $v\in M_K^{\infty}$. For every place $v|\infty$ the completion $K_v$ can be identified with either $\R$ or $\C$, so that the inclusion $\sigma_v:K\into K_v$ is identified with either a real or complex embedding of $K$. The local degree $n_v$ is 1 if $K_v=\R$ and 2 if $K_v=\C$. Hence, if the embeddings $K\into\C$ are explicitly known, one can use the relation $\|x\|_v=|\sigma_v(x)|^{n_v}_{\C}$ to compute, for any $x\in K$, all the numbers $\|x\|_v$ with $v|\infty$. Here, $|\;|_{\C}$ denotes the usual complex absolute value. This observation can be used to compute heights of points in $\P^N(K)$, as shown below.

We define a function $H_{\infty}:K^{N+1}\to\R$ by 

\[H_{\infty}(x_0,\ldots, x_N)=\prod_{v|\infty}\max\{\|x_0\|_v,\ldots, \|x_N\|_v\}.\]

Let $\sigma_1,\ldots, \sigma_{r_1}$ be the real embeddings of $K$, and $\tau_1,\overline\tau_1,\ldots, \tau_{r_2},\overline\tau_{r_2}$ the pairs of complex conjugate embeddings. It is a standard fact that the map $v\mapsto\sigma_v$ is a bijection between $M_K^{\infty}$ and the set $\{\sigma_1,\ldots, \sigma_{r_1},\tau_1,\ldots, \tau_{r_2}\}$. Using this fact we obtain the following alternate definition of $H_{\infty}$ which is more suitable for computation:
\begin{equation}\label{H_infty_comp}H_{\infty}(x_0,\ldots, x_N)=\prod_{\sigma}\max\{|\sigma(x_0)|_{\C},\ldots, |\sigma(x_N)|_{\C}\},
\end{equation}
where $\sigma$ ranges over all embeddings $K\into\C$.
It is well known that for any point $P=[x_0,\ldots, x_N]\in\P^N(K)$ we have the relation
\begin{equation}\label{rel_height_eq}
H_K(P)= H_{\infty}(x_0,\ldots, x_N)/N(\a),
\end{equation}
where $\a$ is the fractional ideal of $\o_K$ generated by $x_0,\ldots, x_N$. (For instance, see \cite[p. 136, 3.7]{silverman}.) Here, $N(\a)$ denotes the norm of the ideal $\a$. In practice, when computing $H_K(P)$ for a given point $P$, the formulas \eqref{rel_height_eq} and \eqref{H_infty_comp} will be used instead of our initial definition of the height.

\subsection{Results from Minkowski Theory}\label{mink_theory_section}

Let $n=[K:\Q]$ be the degree of $K$ over $\Q$. The {\it Minkowski embedding} of $K$ is the map
\begin{equation}\label{mink_embedding_eq}
\Phi:K\longinto\prod_{v|\infty}K_v\;\cong\;\R^n
\end{equation}
given by $x\mapsto (\sigma_v(x))_v$. The stated isomorphism of real vector spaces is not canonical, so we will make a choice: we fix the ordering $\sigma_1,\ldots, \sigma_{r_1},\tau_1,\ldots, \tau_{r_2}$ for the embeddings of $K$, thus obtaining an ordering $v_1,\ldots, v_{r_1+r_2}$ of the places $v|\infty$. This ordering induces an isomorphism
\[\prod_{v|\infty}K_v\;\cong\;\R^{r_1}\times\C^{r_2}.\]
Identifying $\C$ with $\R^2$ in the obvious way we obtain the isomorphism \eqref{mink_embedding_eq}, and in particular a more concrete description of the Minkowski embedding:
\[\Phi(x)=\left(\sigma_1(x),\ldots, \sigma_{r_1}(x),\Re\tau_1(x),\Im\tau_1(x),\ldots, \Re\tau_{r_2}(x),\Im\tau_{r_2}(x)\right).\]

Under the map $\Phi$, every nonzero ideal $\a$ of $\o_K$ becomes a lattice of rank $n$ in $\R^n$. More precisely, fixing an integral basis $\{\omega_1,\ldots,\omega_n\}$ for $\a$, the vectors $\Phi(\omega_1),\ldots, \Phi(\omega_n)$ are linearly independent over $\R$ and generate $\Phi(\a)$ as a $\Z$-module. We denote by $F(\a)$ the fundamental parallelotope spanned by these vectors:
\begin{equation}\label{ideal_fund_par}
F(\a)=\{c_1\Phi(\omega_1)+\cdots+c_n\Phi(\omega_n): c_i\in[0,1)\text{ for all $i$}\}.
\end{equation} 

The volume of $F(\a)$ is given by
\begin{equation}\label{ideal_volume}
\vol F(\a)=2^{-r_2}|\Delta_K|^{1/2}N(\a),
\end{equation}

where $\Delta_K$ is the discriminant of $K$; see \cite[p. 115, Lem. 2]{lang_nt}. Note that $F(\a)$ depends on the choice of integral basis for $\a$, but its volume does not.

Let $r=r_1+r_2-1$ be the rank of the unit group $\o_K^{\ast}$. Recall the standard logarithmic map 
\begin{equation}\label{log_map_eq}\Lambda: K^{\ast}\longto\prod_{v|\infty}\R\;\cong\;\R^{r+1}
\end{equation}
given by $x\mapsto(\log\|x\|_v)_v$. Again, the above isomorphism is not canonical, so we choose the isomorphism induced by the ordering $\sigma_1,\ldots, \sigma_{r_1},\tau_1,\ldots, \tau_{r_2}$. Thus, we obtain the more concrete description
\[\Lambda(x)=\left(\log|\sigma_1(x)|,\ldots, \log|\sigma_{r_1}(x)|,2\log|\tau_1(x)|,\ldots, 2\log|\tau_{r_2}(x)|\right).\]
A classical theorem of Dirichlet states that the image of $\o_K^{\ast}$ under the map $\Lambda$ is a lattice of rank $r$ in the hyperplane consisting of all points $(t_v)$ such that $\sum_v t_v=0$.
Any collection of units $\boldsymbol\epsilon=\{\epsilon_1,\ldots, \epsilon_r\}$ such that the vectors $\Lambda(\epsilon_j)$ form a basis for the lattice $\Lambda(\o_K^{\ast})$ is called a {\it system of fundamental units} in $K$. We fix a choice of fundamental units and define
\begin{equation}\label{fund_domain_units}
F(\boldsymbol\epsilon)= \{t_1\Lambda(\epsilon_1)+\cdots+t_r\Lambda(\epsilon_r): |t_j|\le 1/2 \text{ for all $j$}\}.
\end{equation}
Note that $F(\boldsymbol\epsilon)$ is the closure of a fundamental domain for the lattice $\Lambda(\o_K^{\ast})$. In each direction $v$ we will need to consider how far from the origin the vectors in $F(\boldsymbol\epsilon)$ can be; thus, we define numbers $D_v$ by
\begin{equation}\label{Dv_def_eq}
D_v= \max_{\eta\in F(\boldsymbol\epsilon)}\eta_v.
\end{equation}

We will also denote $D_v$ by $D_{\sigma}$ if $v$ corresponds to the embedding $\sigma\in\{\sigma_1,\ldots, \sigma_{r_1},\tau_1,\ldots, \tau_{r_2}\}$. Using the finite subset
\[V(\boldsymbol\epsilon)= \{t_1\Lambda(\epsilon_1)+\cdots+t_r\Lambda(\epsilon_r): |t_j|= 1/2\text{ for all $j$}\}\]
we obtain a different description of the numbers $D_v$ that is better for computation:
\begin{equation}\label{Dv_max_eq}
D_v= \max_{\eta\in V(\boldsymbol\epsilon)}\eta_v.
\end{equation}

\section{Points of bounded height with fixed ideal class}\label{bdd_class_section}

The starting point for our method is the observation that the set $\P^N(K)$ can be divided into ideal classes: to every point $P=[x_0,\ldots, x_N]\in\P^N(K)$ there corresponds the ideal class $\cl(P)$ of the fractional ideal generated by $x_0,\ldots, x_N$; this is independent of the choice of homogeneous coordinates for $P$. Since there are only finitely many ideal classes of $\o_K$, this observation reduces the problem of finding all points in $\Omega(B)$ to the following:

{\it Given a nonzero ideal $\a$ of $\o_K$, compute the set}
\[\Omega(\a,B)=\{P\in\P^N(K): \cl(P)=\cl(\a)\text{ and } H_K(P)\le B\}.\]

We will therefore begin by considering this specialized version of the main problem. We assume here that the unit rank $r$ is positive; the simpler case when $r=0$ is treated in \S\ref{rank0_section}.

\subsection{A search space for $\Omega(\a,B)$}\label{Omega_aB_section}
The main result of this section, namely Theorem \ref{main_alg_thm} below, will allow us to construct a finite set of points containing $\Omega(\a,B)$. Once this larger set is known, one can eliminate extraneous points from it by computing their heights and comparing to the bound $B$. 

We define a region
\[\calP(\a,B)\subset\prod_{v|\infty}K_v\;\cong\;\R^n\]
as follows: letting $D_v$ be the number \eqref{Dv_def_eq} for every $v|\infty$, the set $\calP(\a,B)$ consists of all points $(s_v)\in\prod_{v|\infty}K_v$ such that 
\[|s_v|\le\left(B\cdot N(\a)\right)^{1/n}\exp(D_v/n_v)\hspace{5mm} \forall\;v|\infty.\]

Note that as a subset of $\R^n$, $\calP(\a,B)$ is a Cartesian product of $r_1$ closed intervals in $\R$ and $r_2$ closed disks in $\R^2$. More precisely, $\calP(\a,B)$ consists of the points 
\[(a_1,\ldots, a_{r_1};x_1,y_1,\ldots, x_{r_2},y_{r_2})\in\R^n\] such that 
\[|a_i|\le\left(B\cdot N(\a)\right)^{1/n}\exp(D_{\sigma_i})\;\;\text{and}\;\;x_j^2+y_j^2\le\left(B\cdot N(\a)\right)^{2/n}\exp(D_{\tau_j})\;\;\]

for all indices $1\le i\le r_1$ and $1\le j\le r_2$.

\begin{thm}\label{main_alg_thm} For every point $P\in\Omega(\a,B)$ there exist $x_0,\ldots, x_N\in\a$ such that the following hold:
\begin{itemize}
\item $P=[x_0,\ldots, x_N]$;
\item $\a$ is the ideal generated by $x_0,\ldots, x_N$;
\item $|N_{K/\Q}(x_i)|\le B\cdot N(\a)$ for all $i$;
\item$\Phi(x_i)\in\calP(\a,B)$ for all $i$.
\end{itemize}
\end{thm}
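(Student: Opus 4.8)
The plan is to start from an arbitrary point $P \in \Omega(\a, B)$ and produce coordinates with the required properties in two stages: first arrange that the coordinate ideal is exactly $\a$ (not merely in the class of $\a$), then use a unit to move the coordinates into the region $\calP(\a,B)$. For the first stage, write $P = [y_0, \ldots, y_N]$ for some $y_i \in K$, and let $\b$ be the fractional ideal generated by the $y_i$. Since $\cl(P) = \cl(\a)$, the ideal $\a\b^{-1}$ is principal, say $\a\b^{-1} = (\lambda)$ for some $\lambda \in K^\ast$; replacing $y_i$ by $\lambda y_i$ (which does not change $P$), we may assume the coordinate ideal is exactly $\a$. In particular each $y_i \in \a$, so $y_i \o_K \subseteq \a$ and hence $N(\a) \mid N(y_i\o_K) = |N_{K/\Q}(y_i)|$ when $y_i \neq 0$; but this gives a lower bound, not the upper bound we want, so the size control must come from the height, not from integrality alone.

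For the size control, recall from \eqref{rel_height_eq} that $H_K(P) = H_\infty(y_0, \ldots, y_N)/N(\a)$, so $H_\infty(y_0, \ldots, y_N) \le B \cdot N(\a)$. Writing this out via \eqref{H_infty_comp}, $\prod_{v \mid \infty} \max_i \|y_i\|_v \le B \cdot N(\a)$. This bounds a product of maxima, but individual coordinates could still be very large in one Archimedean direction and compensated by being small in another — that is exactly what a unit multiple can fix, and arranging this is the heart of the argument. The key step is: choose a unit $u \in \o_K^\ast$ so that, setting $x_i = u y_i$, the vector $\Lambda$ of the "place-wise maximum" of the $x_i$ lands in (a translate of) the fundamental domain $F(\boldsymbol\epsilon)$. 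Concretely, for each $v \mid \infty$ put $m_v = \max_i \|y_i\|_v$, so $(m_v)$ is a point of $\prod_{v\mid\infty}\R_{>0}$ whose coordinate-wise logarithm $\mu = (\log m_v)_v$ satisfies $\sum_v \mu_v = \log H_\infty(y_0,\ldots,y_N) \le \log(B\cdot N(\a))$. Since $\Lambda(\o_K^\ast)$ is a full-rank lattice in the trace-zero hyperplane and $F(\boldsymbol\epsilon)$ is the closure of a fundamental domain for it, we can pick $u \in \o_K^\ast$ such that $\Lambda(u) + \mu$ differs from its orthogonal projection onto the trace-zero hyperplane by... — more precisely, decompose $\mu = \mu_0 + \mu_1$ where $\mu_0$ lies in the trace-zero hyperplane and $\mu_1$ is the constant vector $\frac{1}{n}\log H_\infty \cdot (1,\ldots,1)$ (weighted by local degrees), and choose $u$ with $\Lambda(u) + \mu_0 \in F(\boldsymbol\epsilon)$.

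With this choice, for each $v \mid \infty$ we get
\[
\log \Big(\max_i \|x_i\|_v\Big) = \Lambda(u)_v + \mu_v = \big(\Lambda(u) + \mu_0\big)_v + (\mu_1)_v \le D_v + \tfrac{n_v}{n}\log\!\big(B\cdot N(\a)\big),
\]
using the defining property \eqref{Dv_def_eq} of $D_v$. Exponentiating and recalling $\|x_i\|_v = |\sigma_v(x_i)|^{n_v}$, this yields $|\sigma_v(x_i)| \le (B\cdot N(\a))^{1/n}\exp(D_v/n_v)$ for every $i$ and every $v \mid \infty$, which is precisely the condition $\Phi(x_i) \in \calP(\a, B)$. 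Since $u$ is a unit, $[x_0,\ldots,x_N] = P$ and the coordinate ideal of the $x_i$ is still $\a$, so in particular $x_i \in \a$. Finally, for the norm bound: $|N_{K/\Q}(x_i)| = \prod_{v\mid\infty}\|x_i\|_v \le \prod_{v\mid\infty}\max_j\|x_j\|_v = H_\infty(x_0,\ldots,x_N) = H_\infty(y_0,\ldots,y_N) \le B\cdot N(\a)$, where the first equality is the product formula for the Archimedean places combined with the fact that $x_i \in \o_K$... — actually $|N_{K/\Q}(x_i)| = \prod_{v\mid\infty}\|x_i\|_v$ holds for any $x_i \in K$ by definition of the norm, so this is immediate. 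I expect the main obstacle to be making the choice of the unit $u$ fully rigorous: one must be careful that $\mu_0$ genuinely lies in the trace-zero hyperplane (which uses $\sum_v \mu_v = \log H_\infty$ exactly, via \eqref{H_infty_comp}) and that "$F(\boldsymbol\epsilon)$ is the closure of a fundamental domain" really does let us translate any hyperplane point into it by a lattice vector — this is where the hypothesis $r > 0$ enters and is used.
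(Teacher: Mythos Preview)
Your argument is correct and follows essentially the same route as the paper's proof: normalize coordinates so the ideal is exactly $\a$, form the vector $\mu=(\log\max_i\|y_i\|_v)_v$ (the paper calls this $\theta(Y)$), split it as a multiple of $(n_v)$ plus a trace-zero part, and translate the trace-zero part into $F(\boldsymbol\epsilon)$ by a unit. The only cosmetic difference is that the paper writes the decomposition explicitly in the basis $(n_v),\Lambda(\epsilon_1),\ldots,\Lambda(\epsilon_r)$ and takes $u=\epsilon_1^{-n_1}\cdots\epsilon_r^{-n_r}$ with $n_j$ the nearest integer to the $j$-th coefficient, whereas you invoke the fundamental-domain property abstractly; mathematically these are the same step.
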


In order to prove the theorem we will need some auxiliary results.

\begin{lem}\label{theta_map_lem} Define a map $\theta:K^{N+1}\setminus 0\longto\prod_{v|\infty}\R$ by 
\[\theta(\vec\alpha)=\left(\log\max_i\|\alpha_i\|_v\right)_v,\]
where $\vec\alpha=(\alpha_0,\ldots, \alpha_N)$. Then $\theta$ has the following properties:
\begin{itemize}
\item For every $\vec\alpha\in K^{N+1}\setminus 0$, $\sum_v\theta(\vec\alpha)_v=\log H_{\infty}(\vec\alpha)$.
\item If $u$ is a unit in $\o_K$, then $\theta(u\vec\alpha)=\Lambda(u)+\theta(\vec\alpha)$.
\end{itemize}
\end{lem}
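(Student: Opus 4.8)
The plan is to verify the two properties directly from the definitions of $\theta$, $\Lambda$, and $H_\infty$; both are essentially bookkeeping identities, so the main work is just making the substitutions carefully.

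For the first property, fix $\vec\alpha = (\alpha_0,\ldots,\alpha_N)\in K^{N+1}\setminus 0$. By definition, $\theta(\vec\alpha)_v = \log\max_i\|\alpha_i\|_v$ for each $v\mid\infty$, so summing over all Archimedean places gives
\[
\sum_{v\mid\infty}\theta(\vec\alpha)_v = \sum_{v\mid\infty}\log\max_i\|\alpha_i\|_v = \log\prod_{v\mid\infty}\max_i\|\alpha_i\|_v = \log H_\infty(\vec\alpha),
\]
where the last equality is just the definition of $H_\infty$ given in \S\ref{notation_section}. (One should note that $\max_i\|\alpha_i\|_v > 0$ since $\vec\alpha\neq 0$, so the logarithm is defined.)

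For the second property, let $u\in\o_K^\ast$ and fix a place $v\mid\infty$. Since $\|\;\|_v$ is multiplicative, $\|u\alpha_i\|_v = \|u\|_v\cdot\|\alpha_i\|_v$ for each $i$; as $\|u\|_v > 0$, pulling this common factor out of the maximum gives $\max_i\|u\alpha_i\|_v = \|u\|_v\cdot\max_i\|\alpha_i\|_v$. Taking logarithms, the $v$-component of $\theta(u\vec\alpha)$ equals $\log\|u\|_v + \log\max_i\|\alpha_i\|_v = \Lambda(u)_v + \theta(\vec\alpha)_v$, using that $\Lambda(u)_v = \log\|u\|_v$ by the definition of the logarithmic map in \eqref{log_map_eq}. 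Since this holds for every $v\mid\infty$, we conclude $\theta(u\vec\alpha) = \Lambda(u) + \theta(\vec\alpha)$.

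There is no real obstacle here; the only point requiring a moment's care is the positivity of $\max_i\|\alpha_i\|_v$ (so that logarithms make sense) and of $\|u\|_v$ (so that it can be factored out of the maximum without reversing the inequality), both of which are immediate.
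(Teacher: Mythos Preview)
Your proof is correct and follows exactly the approach of the paper, which simply states that both properties follow immediately from the definitions. Your writeup just makes those definition-chasing steps explicit.
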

\begin{proof}
Both properties follow immediately from the definitions.
\end{proof}

\begin{lem}\label{H_infty_scale_lem} For any $\lambda\in K$ and $Y\in K^{N+1}$ we have $H_{\infty}(\lambda Y)=|N_{K/\Q}(\lambda)|\cdot H_{\infty}(Y).$
\end{lem}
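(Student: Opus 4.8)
The plan is to reduce to the computational formula \eqref{H_infty_comp} for $H_\infty$ in terms of the complex embeddings of $K$, pull the contribution of $\lambda$ out of each maximum, and then recognize the resulting factor as the field norm. First I would dispose of the degenerate case $\lambda=0$: there $N_{K/\Q}(\lambda)=0$ and $H_\infty(\lambda Y)=H_\infty(0,\ldots,0)=\prod_{v|\infty}\max\{0\}=0$, so the identity holds trivially. Assume from now on that $\lambda\neq 0$.

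For $\lambda\neq 0$, writing $Y=(x_0,\ldots,x_N)$ and using \eqref{H_infty_comp}, I would compute
\[
H_\infty(\lambda Y)=\prod_{\sigma}\max_i|\sigma(\lambda x_i)|_{\C}=\prod_{\sigma}\max_i\bigl(|\sigma(\lambda)|_{\C}\cdot|\sigma(x_i)|_{\C}\bigr),
\]
where $\sigma$ ranges over the embeddings $K\into\C$. The one elementary fact to invoke — and essentially the only thing that needs checking — is that for each fixed $\sigma$ the nonnegative real number $|\sigma(\lambda)|_{\C}$ factors out of the maximum over $i$, since $\max_i(c\cdot a_i)=c\cdot\max_i a_i$ whenever $c\ge 0$. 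Applying this with $c=|\sigma(\lambda)|_{\C}$ gives
\[
H_\infty(\lambda Y)=\prod_{\sigma}\Bigl(|\sigma(\lambda)|_{\C}\cdot\max_i|\sigma(x_i)|_{\C}\Bigr)=\Bigl(\prod_{\sigma}|\sigma(\lambda)|_{\C}\Bigr)\cdot\prod_{\sigma}\max_i|\sigma(x_i)|_{\C}=\Bigl(\prod_{\sigma}|\sigma(\lambda)|_{\C}\Bigr)\cdot H_\infty(Y).
\]

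Finally I would identify the leading factor with $|N_{K/\Q}(\lambda)|$: since the field norm is the product of the conjugates of $\lambda$ over all embeddings into $\overline{\Q}$, hence over all $\sigma:K\into\C$, we have $\prod_{\sigma}|\sigma(\lambda)|_{\C}=\bigl|\prod_{\sigma}\sigma(\lambda)\bigr|=|N_{K/\Q}(\lambda)|$. Combining this with the previous display yields the claim. There is no serious obstacle here; the only points deserving any attention are the separate treatment of $\lambda=0$ and the observation that multiplication by a nonnegative scalar commutes with taking a maximum — which is exactly why it is convenient to work with the absolute values $|\sigma(\cdot)|_{\C}$ rather than with the embeddings themselves inside the maxima.
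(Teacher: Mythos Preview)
Your proof is correct and follows exactly the approach the paper intends: the paper's own proof consists of the single sentence ``This is a consequence of \eqref{H_infty_comp},'' and you have simply written out the details of that consequence. Your treatment of the case $\lambda=0$ and the identification $\prod_\sigma|\sigma(\lambda)|_{\C}=|N_{K/\Q}(\lambda)|$ are the natural steps one would fill in.
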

\begin{proof}
This is a consequence of \eqref{H_infty_comp}.
\end{proof}

\begin{proof}[Proof of Theorem \ref{main_alg_thm}] 
Let $P\in\Omega(\a,B)$. Since $\cl(P)=\cl(\a)$, there are homogeneous coordinates $[y_0,\ldots, y_N]$ for $P$ such that $\a$ is the ideal generated by $y_0,\ldots, y_N$. Letting $Y=(y_0,\ldots, y_N)\in K^{N+1}$, we have
\[H_{\infty}(Y)=H_K(P)\cdot N(\a)\le B\cdot N(\a).\]
The vectors $\Lambda(\epsilon_1),\ldots,\Lambda(\epsilon_r)$ together with the vector $(n_v)$ of local degrees form a basis for the Euclidean space $\prod_{v|\infty}\R$, so we can write
\begin{equation}\label{theta_Y_eq}
\theta(Y)=t\cdot (n_v)+\sum_{j=1}^rt_j\cdot\Lambda(\epsilon_j)
\end{equation}
for some real numbers $t,t_1,\ldots, t_r$. Considering the sum of the coordinates of the vectors on both sides of this equation, we find that 
\[t=\log H_{\infty}(Y)^{1/n}.\]
Here, we are using the first property of the map $\theta$ listed in Lemma \ref{theta_map_lem}. Let $n_j$ be an integer closest to $t_j$ for every $j$, so that $|t_j-n_j|\le 1/2$, and let $u=\epsilon_1^{-n_1}\cdots\epsilon_r^{-n_r}\in\o_K^{\ast}$. We now choose a different set of homogeneous coordinates for $P$: set $x_i=uy_i$ and $X=(x_0,\ldots, x_N)\in K^{N+1}$. Clearly, $P=[x_0,\ldots, x_N]$ and $\a$ is generated by $x_0,\ldots, x_N$; moreover, using Lemma \ref{H_infty_scale_lem} we see that $H_{\infty}(X)=H_{\infty}(uY)=H_{\infty}(Y)$, so our work above shows that
\[H_{\infty}(X)\le B\cdot N(\a)\;\;\;\text{and}\;\;\;t=\log H_{\infty}(X)^{1/n}.\] 
It follows from \eqref{H_infty_comp} that for every index $i$ we have $|N_{K/\Q}(x_i)|\le H_{\infty}(X)\le B\cdot N(\a)$.
It remains to show that $\Phi(x_i)\in\calP(\a,B)$ for all $i$.

Using the second property of $\theta$ stated in Lemma \ref{theta_map_lem} we obtain by \eqref{theta_Y_eq} that
\[\theta(X)=\theta(uY)=\Lambda(u)+\theta(Y)=t\cdot(n_v)+\eta\]
for some $\eta\in F(\boldsymbol\epsilon)$. Considering the equation $\theta(X)=t\cdot(n_v)+\eta$ one coordinate at a time we find that 
\[\log\max_i\|x_i\|_v=\log H_{\infty}(X)^{n_v/n}+\eta_v\hspace{5mm} \forall\;v|\infty,\]
and so
\[\max_i\|x_i\|_v=H_{\infty}(X)^{n_v/n}\exp(\eta_v)\hspace{5mm} \forall\;v|\infty.\]
Thus, for every index $i\in\{0,\ldots, N\}$ we have
\[\|x_i\|_v\le H_{\infty}(X)^{n_v/n}\exp(\eta_v)\le\left(B\cdot N(\a)\right)^{n_v/n}\exp(D_v)\hspace{5mm} \forall\;v|\infty.\]
By definition, this means that $\Phi(x_i)\in\calP(\a,B)$ for every $i$.
\end{proof}

Define a subset $\calC(\a,B)\subset\o_K$ by 
\[\calC(\a,B)=\{x\in\a: \Phi(x)\in \calP(\a,B) \text{ and } |N_{K/\Q}(x)|\le B\cdot N(\a)\}.\]
Note that $\calC(\a,B)$ is finite, since there can only be finitely many points of the lattice $\Phi(\a)$ lying inside the bounded region $\calP(\a,B)$. Theorem \ref{main_alg_thm} shows that every point $P\in\Omega(\a,B)$ has homogeneous coordinates coming from $\calC(\a,B)$, and thus provides a finite search space for the points in $\Omega(\a,B)$. More precisely, we have the following description of $\Omega(\a,B)$.

\begin{cor}
The set $\Omega(\a,B)$ consists of all points of the form $P=[x_0,\ldots, x_N]$ satisfying
\begin{itemize}
\item $x_i\in\calC(\a,B)$ for all $i$;
\item $\a$ is the ideal generated by $x_0,\ldots, x_N$; and 
\item $H_{\infty}(x_0,\ldots, x_N)\le B\cdot N(\a)$.
\end{itemize}
\end{cor}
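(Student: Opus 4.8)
The plan is to prove the two inclusions separately, both essentially by unwinding definitions, using Theorem \ref{main_alg_thm} for the hard direction and the height formula \eqref{rel_height_eq} together with Lemma \ref{H_infty_scale_lem} for the converse. Write $S$ for the set of points $P=[x_0,\ldots,x_N]$ satisfying the three bulleted conditions; we want $\Omega(\a,B)=S$.

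First I would handle the inclusion $\Omega(\a,B)\subseteq S$. Given $P\in\Omega(\a,B)$, Theorem \ref{main_alg_thm} produces coordinates $x_0,\ldots,x_N\in\a$ with $P=[x_0,\ldots,x_N]$, with $\a$ generated by the $x_i$, with $|N_{K/\Q}(x_i)|\le B\cdot N(\a)$, and with $\Phi(x_i)\in\calP(\a,B)$. The last two properties say precisely that $x_i\in\calC(\a,B)$ for every $i$, so the first two bulleted conditions hold. For the third, apply \eqref{rel_height_eq} with these coordinates: since $\a$ is exactly the ideal generated by $x_0,\ldots,x_N$, we get $H_\infty(x_0,\ldots,x_N)=H_K(P)\cdot N(\a)\le B\cdot N(\a)$, which is the remaining condition. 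Hence $P\in S$.

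For the reverse inclusion $S\subseteq\Omega(\a,B)$, let $P=[x_0,\ldots,x_N]$ satisfy the three conditions. Since $\a$ is the fractional ideal generated by the $x_i$, by definition $\cl(P)=\cl(\a)$. Moreover \eqref{rel_height_eq} gives $H_K(P)=H_\infty(x_0,\ldots,x_N)/N(\a)\le B$, using the third bulleted condition. Therefore $P\in\Omega(\a,B)$. (Here the condition $x_i\in\calC(\a,B)$ is not needed for this direction — it only guarantees that the search described by $S$ ranges over a finite set, which was already noted before the corollary.)

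I do not expect any genuine obstacle here: the corollary is a bookkeeping restatement of Theorem \ref{main_alg_thm}. The only point requiring a small amount of care is making sure that in the forward direction one uses the coordinates supplied by Theorem \ref{main_alg_thm} rather than arbitrary homogeneous coordinates, since it is precisely for those special coordinates that $\a$ equals (rather than merely being equivalent to) the generated ideal, and hence that \eqref{rel_height_eq} yields the clean equality $H_\infty=H_K\cdot N(\a)$ needed to check the third condition.
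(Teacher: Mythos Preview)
Your proof is correct and follows essentially the same approach as the paper: Theorem \ref{main_alg_thm} for the inclusion $\Omega(\a,B)\subseteq S$, and the height formula \eqref{rel_height_eq} together with the definition of $\cl(P)$ for the reverse inclusion. Your additional remarks (that $x_i\in\calC(\a,B)$ is unused in the reverse direction, and that one must use the specific coordinates from Theorem \ref{main_alg_thm}) are accurate observations not made explicit in the paper, and Lemma \ref{H_infty_scale_lem} is not actually needed.
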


\begin{proof}
Let $P\in\Omega(\a,B)$ be any point. By Theorem \ref{main_alg_thm} we know that there exist elements $x_0,\ldots, x_N\in\calC(\a,B)$ such that $P=[x_0,\ldots, x_N]$ and $\a$ is generated by $x_0,\ldots, x_N$. Moreover, since $H_K(P)\le B$, then \eqref{rel_height_eq} implies that $H_{\infty}(x_0,\ldots, x_N)\le B\cdot N(\a)$.

Conversely, suppose that $P=[x_0,\ldots, x_N]$ is any point whose coordinates satisfy the conditions listed in the corollary. The last two conditions then imply that $\cl(P)=\cl(\a)$ and $H_K(P)\le B$, so that $P\in\Omega(\a,B)$ by definition.
\end{proof}

 Using this corollary we obtain the following initial step towards our main algorithm.

\begin{alg}\label{omega_aB_alg}Computing $\Omega(\a,B)$.\mbox{}
\begin{enumerate}
\item Create an empty list $L$.
\item Compute the set $\calC(\a,B)$.
\item For every tuple $(x_0,\ldots, x_N)$ of elements of $\calC(\a,B)$:

\noindent If $\a$ is generated by $x_0,\ldots, x_N$, then:
\begin{enumerate}
\item Let $P=[x_0,\ldots, x_N]\in\P^N(K)$.
\item If $H_{\infty}(x_0,\ldots, x_N)\le B\cdot N(\a)$, then include $P$ in $L$.
\end{enumerate}
\item Return $L$.
\end{enumerate}
\end{alg}

\subsection{Exploiting group actions}\label{group_action_section}
Before considering how the steps of Algorithm \ref{omega_aB_alg} can be carried out in practice, we discuss a modification of the crucial step in the algorithm, which is to compute the set $\calC(\a,B)$. Once this set has been computed, the next step in Algorithm \ref{omega_aB_alg} is to build $(N+1)$-tuples of elements of $\calC(\a,B)$ and check the heights of the corresponding points in $\P^N(K)$. We will show here that it is possible to replace $\calC(\a,B)$ by a proper subset; this will have the effect of reducing significantly the number of tuples that need to be considered when computing $\Omega(\a,B)$. In addition, we show how to use two group actions to reduce the number of height computations that are carried out.

Let $\mu_K$ denote the group of roots of unity in $K$. From the definitions it follows that if $x\in\calC(\a,B)$ and $\zeta\in\mu_K$, then $\zeta x\in\calC(\a,B)$; hence, the group $\mu_K$ acts on $\calC(\a,B)$. Let $g_0,g_1,\ldots, g_t$ be elements representing all the orbits of this action. We will see that in Algorithm \ref{omega_aB_alg}, rather than considering all $(N+1)$-tuples of elements of $\calC(\a,B)$, one can restrict attention to tuples of elements of the set $\{g_0,g_1,\ldots, g_t\}$. Further reductions can be achieved by using two group actions on the set
\[K^{N+1}_{\bullet}=K^{N+1}\backslash\{(0,\ldots, 0)\}.\]
First, the action of the symmetric group $S_{N+1}$: for $\pi\in S_{N+1}$ and $X=(x_0,\ldots, x_N)\in K^{N+1}_{\bullet}$,
\[\pi\cdot X=(x_{\pi^{-1}(0)},\ldots, x_{\pi^{-1}(N)}).\]
Second, the action of the group $\mu_K^N$: for $u=(\zeta_0,\ldots, \zeta_{N-1})\in\mu_K^N$ and $X=(x_0,\ldots, x_N)\in K^{N+1}_{\bullet}$, 
\[u\cdot X=(\zeta_0x_0,\ldots, \zeta_{N-1}x_{N-1}, x_N).\]

For $X\in K^{N+1}_{\bullet}$ we let
\begin{equation}\label{orbit_def}
\o(X)=\{[u\cdot (\pi\cdot X)]: u\in\mu_K^N \;\text{and} \;\pi\in S_{N+1}\}\subset\P^N(K),
\end{equation}

where $[Y]$ denotes the equivalence class in $\P^N(K)$ of the point $Y\in K^{N+1}_{\bullet}$.

\begin{prop}\label{increasing_tuples_prop} Let $g_0,g_1,\ldots, g_t\in\calC(\a,B)$ be elements representing all the orbits of the action of $\mu_K$ on $\calC(\a,B)$. Let $\mathcal M$ be the set of all tuples $X=(g_{i_0},\ldots, g_{i_N})$ such that:
\begin{itemize}
\item $0\le i_0\le i_1\le\cdots\le i_N\le t$;
\item $\a$ is the ideal generated by $g_{i_0},\ldots, g_{i_N}$; and
\item $H_{\infty}(X)\le B\cdot N(\a)$.
\end{itemize}

Then
\[\Omega(\a,B)=\bigcup_{X\in\calM}\o(X).\]
\end{prop}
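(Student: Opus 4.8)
The plan is to establish a two-way inclusion between $\Omega(\a,B)$ and $\bigcup_{X\in\calM}\o(X)$, using the preceding Corollary as the main bridge and the group actions to move any allowed representative into the normalized form demanded by $\calM$.

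For the inclusion $\bigcup_{X\in\calM}\o(X)\subseteq\Omega(\a,B)$, I would first check that for every $X\in\calM$ the set $\o(X)$ consists of points lying in $\Omega(\a,B)$. Fix $X=(g_{i_0},\ldots,g_{i_N})\in\calM$, $\pi\in S_{N+1}$, and $u\in\mu_K^N$. Permuting the coordinates changes neither the ideal generated by the coordinates nor the value of $H_\infty$ (the latter because the max over coordinates in the formula \eqref{H_infty_comp} is symmetric), so $\pi\cdot X$ still satisfies all three bullets defining $\calM$. Multiplying individual coordinates by roots of unity $\zeta_j\in\mu_K$ also preserves the generated ideal (roots of unity are units) and preserves $H_\infty$ since $|\sigma(\zeta_j)|_{\C}=1$ for every embedding $\sigma$; moreover each coordinate of $u\cdot(\pi\cdot X)$ lies in $\a$ and in fact in $\calC(\a,B)$ because, as noted in the paper, $\calC(\a,B)$ is stable under $\mu_K$. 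Hence $u\cdot(\pi\cdot X)$ satisfies the three hypotheses of the Corollary, so its projective class lies in $\Omega(\a,B)$; as this holds for all $u,\pi$, we get $\o(X)\subseteq\Omega(\a,B)$, and taking the union over $X\in\calM$ gives one inclusion.

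For the reverse inclusion $\Omega(\a,B)\subseteq\bigcup_{X\in\calM}\o(X)$, take $P\in\Omega(\a,B)$. By the Corollary there exist $x_0,\ldots,x_N\in\calC(\a,B)$ with $P=[x_0,\ldots,x_N]$, with $\a$ generated by the $x_i$, and with $H_\infty(x_0,\ldots,x_N)\le B\cdot N(\a)$. Since $g_0,\ldots,g_t$ represent the $\mu_K$-orbits on $\calC(\a,B)$, for each $i$ we may write $x_i=\zeta_i g_{j_i}$ for some $\zeta_i\in\mu_K$ and some index $j_i\in\{0,\ldots,t\}$. Now choose a permutation $\pi\in S_{N+1}$ that sorts the indices $j_i$ into nondecreasing order, i.e.\ so that the sorted tuple is $X=(g_{i_0},\ldots,g_{i_N})$ with $i_0\le i_1\le\cdots\le i_N$. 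This $X$ is obtained from $(g_{j_0},\ldots,g_{j_N})$ by a permutation, which is in turn obtained from $(x_0,\ldots,x_N)$ by a $\mu_K^{N+1}$-action; since $H_\infty$ and the generated ideal are invariant under both operations (as above), $X$ satisfies all three conditions defining $\calM$, so $X\in\calM$. Finally I must check that $P\in\o(X)$: we have $(x_0,\ldots,x_N)=u_0\cdot(\pi^{-1}\cdot X)$ for a suitable diagonal root-of-unity tuple $u_0$; the only subtlety is that $\mu_K^N$ only scales the first $N$ coordinates, not the last. But this costs nothing projectively: after scaling by the inverse of $\zeta$ in the last slot — equivalently, absorbing that global root of unity, which does not change the point in $\P^N(K)$ — we may arrange the last coordinate to match, and the first $N$ scalings are exactly an element of $\mu_K^N$. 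Hence $[x_0,\ldots,x_N]=[u\cdot(\pi^{-1}\cdot X)]\in\o(X)$, i.e.\ $P\in\o(X)\subseteq\bigcup_{X\in\calM}\o(X)$.

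The main obstacle is the bookkeeping around the asymmetry of the $\mu_K^N$-action: it normalizes only $N$ of the $N+1$ coordinates, so one has to argue that the leftover root of unity in the last coordinate is harmless projectively, and that sorting indices and then re-expressing $P$ as $[u\cdot(\pi\cdot X)]$ genuinely lands in the set $\o(X)$ as defined in \eqref{orbit_def} (note $\o(X)$ is defined as a set of \emph{projective} classes, which is exactly what makes the last-coordinate issue evaporate). Everything else — invariance of $H_\infty$ under coordinate permutations and root-of-unity scalings, and stability of $\calC(\a,B)$ under $\mu_K$ — is immediate from \eqref{H_infty_comp} and the definitions, so I would state those as one-line observations rather than belabor them.
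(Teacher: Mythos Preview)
Your proof is correct and follows essentially the same approach as the paper's: both directions use invariance of $H_\infty$ and the generated ideal under permutations and root-of-unity scalings, and the reverse inclusion proceeds by writing each coordinate as a root of unity times some $g_j$, sorting, and absorbing the leftover root of unity in the last slot projectively. The only cosmetic difference is the order of operations---the paper normalizes the last coordinate first (dividing through by $z_N$) and then permutes, whereas you permute first and then normalize---but both arguments are equivalent and equally valid.
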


\begin{proof} The definition of $\calM$ implies that for any $X\in\calM$, the point $P=[X]$ satisfies $H_K(P)\le B$ and $\cl(P)=\cl(\a)$, so that $P\in\Omega(\a,B)$. A simple calculation shows that all the points in $\o(X)$ have the same height as $P$ and the same ideal class; therefore, $\o(X)\subseteq\Omega(\a,B)$. This proves one inclusion in the proposition. 

To see the reverse inclusion, let $P\in\Omega(\a,B)$. By Theorem \ref{main_alg_thm} we can write $P=[y_0,\ldots, y_N]$ with $y_i\in\calC(\a,B)$ generating the ideal $\a$. For every index $i$ we have $y_i=z_ig_{e_i}$ for some $z_i\in\mu_K$ and some index $e_i\in\{0,\ldots, t\}$; hence,
\[P=[z_0g_{e_0},\ldots, z_Ng_{e_N}].\]

Letting $\zeta_i=z_iz_N^{-1}$ for $i\in\{0,\ldots, N-1\}$ we obtain
\[P=[\zeta_0g_{e_0},\ldots, \zeta_{N-1}g_{e_{N-1}},g_{e_N}].\] 
By applying a permutation $\pi$ of the set $\{0,\ldots, N\}$ we may arrange the indices $e_i$ so that \[e_{\pi(0)}\le\cdots\le e_{\pi(N)}.\] Define  elements $x_0,\ldots,x_N$ by $x_i=g_{e_{\pi(i)}}$, so that $x_{\pi^{-1}(j)}=g_{e_j}$. Then
\[P=[\zeta_0x_{\pi^{-1}(0)},\ldots, \zeta_{N-1}x_{\pi^{-1}(N-1)}, x_{\pi^{-1}(N)}].\] 
Letting $X=(x_0,\ldots,x_N)\in K^{N+1}_{\bullet}$ and $u=(\zeta_0,\ldots, \zeta_{N-1})\in\mu_K^N$, this means by definition that
\[P=[u\cdot(\pi\cdot X)]\in\o(X).\]
We claim that $X\in\calM$. First of all, we have $X=(g_{e_{\pi(0)}}, \ldots, g_{e_{\pi(N)}})$ with increasing indices $e_{\pi(i)}$. Furthermore, from the construction of $X$ it follows that the entries $x_0,\ldots, x_N$ are associate -- in some order -- to $y_0,\ldots, y_N$. In particular, the ideal generated by $x_0,\ldots, x_N$ is equal to the ideal generated by $y_0,\ldots, y_N$, which is $\a$ by assumption. Finally, since all the points in $\o(X)$ have the same height, then
\[H_{\infty}(X)/N(\a)=H_K([X])=H_K(P)\le B,\] 
and hence $H_{\infty}(X)\le B\cdot N(\a)$. This shows that $X\in\mathcal M$, which completes the proof of the proposition.
\end{proof}

We deduce from Proposition \ref{increasing_tuples_prop} the following improvement of Algorithm \ref{omega_aB_alg} in which fewer points must be considered and fewer height computations are needed.

\begin{alg}\label{omega_aB_alg_improved}Computing $\Omega(\a,B)$.\mbox{}
\begin{enumerate}
\item Create an empty list $L$.
\item Compute representatives $g_0, g_1,\ldots, g_t$ of the orbit space $\calC(\a,B)/\mu_K$.
\item For every tuple of indices $(i_0,\ldots, i_N)$ such that $0\le i_0\le i_1\le\cdots\le i_N\le t$:

\begin{enumerate}
\item Let $X$ be the point $(g_{i_0},\ldots, g_{i_N})$.
\item If $\a$ is generated by $g_{i_0},\ldots, g_{i_N}$ and $H_{\infty}(X)\le B\cdot N(\a)$, then:

Include in $L$ all the points in $\o(X)$.
\end{enumerate}
\item Return $L$.
\end{enumerate}
\end{alg}

Precise details on how to carry out step (2) are given in Algorithm \ref{C_a_B_alg_second}.

\begin{rem} With the notation of Proposition \ref{increasing_tuples_prop}, it may very well happen that the sets $\o(X)$ and $\o(Y)$ for distinct tuples $X,Y\in\calM$ have a nonempty intersection. Furthermore, it may happen that the points of the form $[u\cdot(\pi\cdot X)]$ constructed when computing $\o(X)$ are not all distinct. Hence, in Algorithm \ref{omega_aB_alg_improved} one may want to remove duplicate points from the list $L$ before returning it. 
\end{rem} 

\begin{rem}\label{action_remark} Everything stated in this section remains valid if the group $\mu_K$ is replaced throughout by its subgroup $\{\pm 1\}$. This observation will be used below to give two different versions of our main algorithm.
\end{rem}

\subsection{Computing $\calC(\a,B)$}\label{C_aB_section}  We turn now to the question of how the set $\calC(\a,B)$ can be computed. Two approaches to this problem will be discussed, and in each case it will be shown that one can compute either $\calC(\a,B)/\mu_K$ or $\calC(\a,B)/\{\pm 1\}$ instead of the entire set $\calC(\a,B)$. By Proposition \ref{increasing_tuples_prop} and Remark \ref{action_remark}, having a complete set of representatives for either one of these orbit spaces is enough to determine $\Omega(\a,B)$ using Algorithm \ref{omega_aB_alg_improved}.

\subsubsection{First approach}\label{phi_approach}
The elements of $\calC(\a,B)$ can be found by first computing the larger set 
\[\widetilde\calC(\a,B)=\{x\in\a: \Phi(x)\in \calP(\a,B)\}\]
and then eliminating those elements of $\widetilde\calC(\a,B)$ whose norms exceed the bound $B\cdot N(\a)$. Note that the computation of $\widetilde\calC(\a,B)$ is equivalent to finding all points of the lattice $\Phi(\a)$ that lie inside the region $\calP(\a,B)$; it would therefore be desirable to make this region as small as possible. The size of $\calP(\a,B)$ is tied to the sizes of the numbers $D_v$ for $v|\infty$, which in turn are determined by the lengths of the vectors $\Lambda(\epsilon_1),\ldots, \Lambda(\epsilon_r)$ forming a basis for the lattice $\Lambda(\o_K^{\ast})\subset\R^{r+1}$. Thus, a basis consisting of short vectors should be computed. This can be achieved by first computing any system $\{\epsilon_1,\ldots, \epsilon_r\}$ of fundamental units (for instance, using the method described in \cite[\S 6.5.3]{cohen}) and then applying the LLL reduction algorithm \cite{lll}, or faster variants such as the Nguyen-Stehl\'e algorithm \cite{nguyen-stehle}, to the basis $\Lambda(\epsilon_1),\ldots, \Lambda(\epsilon_r)$. Having obtained a set of ``short" fundamental units in this way, the corresponding numbers $D_v$ can be computed using \eqref{Dv_max_eq}. The region $\calP(\a,B)$ is then determined, and we need to find all points of the lattice $\Phi(\a)$ that lie inside it. For this purpose it will be convenient to work with a slightly larger region defined as follows. Recall that $\calP(\a,B)$ consists of the points 
\[(a_1,\ldots, a_{r_1};x_1,y_1,\ldots, x_{r_2},y_{r_2})\in\R^n\] such that 
\[|a_i|\le\left(B\cdot N(\a)\right)^{1/n}\exp(D_{\sigma_i})\;\;\text{and}\;\;x_j^2+y_j^2\le\left(B\cdot N(\a)\right)^{2/n}\exp(D_{\tau_j})\;\;\]

for all indices $1\le i\le r_1$ and $1\le j\le r_2$. We define $\calP'(\a,B)\subset\R^n$ to be the region consisting of all points 
\[(a_1,\ldots, a_{r_1};x_1,y_1,\ldots, x_{r_2},y_{r_2})\in\R^n\] such that 
\[|a_i|\le\left(B\cdot N(\a)\right)^{1/n}\exp(D_{\sigma_i})\;\;\text{and}\;\;\max\{|x_j|,|y_j|\}\le\left(B\cdot N(\a)\right)^{1/n}\exp(D_{\tau_j}/2)\;\;\]

for all relevant indices $i,j$. Clearly $\calP'(\a,B)$ contains $\calP(\a,B)$, so in order to compute $\Phi(\a)\cap\calP(\a,B)$ it would suffice to compute $\Phi(\a)\cap\calP'(\a,B)$ and then eliminate points that do not satisfy the inequalities defining $\calP(\a,B)$. Note that $\calP(\a,B)$ is a Cartesian product of $r_1$ closed intervals in $\R$ and $r_2$ closed disks in $\R^2$. The definition of $\calP'(\a,B)$ differs from that of $\calP(\a,B)$ only in that each closed disk, say of radius $R$, is replaced by a square of side length $2R$ containing the disk. The ratio of the areas of the square and the disk is $4R^2/\pi R^2=4/\pi$. Thus,   
\[\frac{\vol \calP'(\a,B)}{\vol \calP(\a,B)}=\left(\frac{4}{\pi}\right)^{r_2}.\]
At the expense of increasing  the size of the region in which we search for lattice points (with a precise measure of the increase being given by the above equation), we gain the advantage of having a region that is easier to work with computationally. Indeed, $\calP'(\a,B)$ is a polytope -- i.e., the convex hull of a finite set of points -- and the problem of enumerating lattice points in polytopes has been well studied from a theoretical as well as computational point of view \cite{barvinok-pommersheim, barvinok, latte}.

In order to determine the points of the lattice $\Phi(\a)$ that lie inside the polytope $\calP'(\a,B)$, we translate this problem into one of finding \textit{integer} lattice points in a different polytope. Let $\{\omega_1,\ldots, \omega_n\}$ be an integral basis for the ideal $\a$, and let $S$ be the $n\times n$ matrix with column vectors $\Phi(\omega_1),\ldots, \Phi(\omega_n)$. The linear isomorphism $\R^n\to\R^n$ represented by the matrix $S^{-1}$ transforms the lattice $\Phi(\a)$ into the lattice $\Z^n$ and the polytope $\calP'(\a,B)$ into a polytope $\calX$. Note that for any integers $s_1,\ldots, s_n$,
\begin{equation}\label{polytope_trans}s_1\Phi(\omega_1)+\cdots+s_n\Phi(\omega_n)\in\calP'(\a,B) \iff (s_1,\ldots, s_n)\in \calX.\end{equation}
Hence, $\widetilde\calC(\a,B)$ is contained in the set of all numbers $x$ of the form $x=s_1\omega_1+\cdots+s_n\omega_n$ with $(s_1,\ldots, s_n)\in \Z^n\cap\calX$. We can therefore determine all elements of $\widetilde\calC(\a,B)$ once the integer points in $\calX$ are known: indeed, it suffices to construct all numbers $x$ of the above form and check the condition $\Phi(x)\in\calP(\a,B)$. An algorithm for computing integer lattice points in polytopes is described in \cite{latte} and implemented in the software package \texttt{LattE} \cite{latte_manual}. There is also ongoing work to include this algorithm in the \texttt{Sage} \cite{sage} software system.

To summarize this approach to computing $\calC(\a,B)$: first, a reduced basis for the lattice $\Lambda(\o_K^{\ast})$ is computed, and using it the polytope $\calP'(\a,B)$ is constructed. Computing an integral basis $\{\omega_1,\ldots, \omega_n\}$ of $\a$, an $n\times n$ matrix $S$ is defined to have columns $\Phi(\omega_1),\ldots, \Phi(\omega_n)$. The map $S^{-1}$ is then applied to $\calP'(\a,B)$ to obtain a new polytope $\calX$. Listing the integer points in $\calX$ we obtain a finite list of all integer tuples $(s_1,\ldots, s_n)$ with the property that the element $x=s_1\omega_1+\cdots+s_n\omega_n$ satisfies $\Phi(x)\in\calP'(\a,B)$. For all such elements $x$ we then check whether $\Phi(x)\in\calP(\a,B)$, thus obtaining the set $\widetilde\calC(\a,B)$. Finally, we compute the norms of all elements of $\widetilde\calC(\a,B)$ in order to check the inequality $|N_{K/\Q}(x)|\le B\cdot N(\a)$, and thus determine all elements of $\calC(\a,B)$.

As noted in the previous section, for the purpose of computing $\Omega(\a,B)$ it would suffice to find elements representing all the orbits in $\calC(\a,B)/\{\pm 1\}$. With minor modifications, the above procedure can be used to compute only these elements instead of all $\calC(\a,B)$. 

\begin{prop}\label{C_a_B_pm_reps} Let $\mathcal H\subset\R^n$ be the half-space consisting of all points whose first coordinate is non-negative. A complete set of representatives for the orbit space $\calC(\a,B)/\{\pm 1\}$ is given by all numbers of the form
\[x=s_1\omega_1+\cdots+s_n\omega_n\]
with $(s_1,\ldots, s_n)\in \Z^n\cap\mathcal H\cap\calX$ satisfying $\Phi(x)\in\calP(\a,B)$ and $|N_{K/\Q}(x)|\le B\cdot N(\a)$.
\end{prop}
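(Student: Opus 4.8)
The plan is to show that the map $x \mapsto \{x,-x\}$ gives a bijection between the described set and the orbit space $\calC(\a,B)/\{\pm 1\}$, by checking that the set is well-defined (its elements lie in $\calC(\a,B)$), that it hits every orbit, and that it hits each orbit exactly once.

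First I would verify that every $x$ of the stated form actually lies in $\calC(\a,B)$. Writing $x = s_1\omega_1 + \cdots + s_n\omega_n$ with integer coefficients and $\{\omega_1,\ldots,\omega_n\}$ an integral basis for $\a$ shows $x \in \a$. The condition $(s_1,\ldots,s_n) \in \Z^n \cap \calX$ together with the equivalence \eqref{polytope_trans} gives $\Phi(x) \in \calP'(\a,B)$, and then the two explicit hypotheses $\Phi(x) \in \calP(\a,B)$ and $|N_{K/\Q}(x)| \le B\cdot N(\a)$ are exactly the remaining defining conditions of $\calC(\a,B)$. So the set is contained in $\calC(\a,B)$, hence descends to a well-defined subset of $\calC(\a,B)/\{\pm 1\}$.

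Next I would show surjectivity onto the orbit space. Let $y \in \calC(\a,B)$ be arbitrary. Then $y = s_1\omega_1 + \cdots + s_n\omega_n$ for a unique $(s_1,\ldots,s_n) \in \Z^n$, and by \eqref{polytope_trans} (applied with $\calP(\a,B) \subset \calP'(\a,B)$) we get $(s_1,\ldots,s_n) \in \calX$. The first coordinate of $\Phi(y)$ is $\sigma_1(y)$; if this is non-negative we are done, and otherwise we replace $y$ by $-y$, which lies in the same $\{\pm 1\}$-orbit, still lies in $\calC(\a,B)$ (the set $\calC(\a,B)$ is stable under $\zeta \mapsto \zeta x$ for roots of unity, in particular under negation, since $\Phi$ is $\R$-linear, $\calP(\a,B)$ is symmetric, and norms are unchanged up to sign), and now has coefficient vector in $\mathcal H$. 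Thus every orbit meets the set.

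Finally I would check that distinct elements of the set represent distinct orbits; this is the only step requiring a small argument. Suppose $x$ and $x'$ are two elements of the set with $x' = -x$. Applying $\sigma_1$ gives $\sigma_1(x') = -\sigma_1(x)$, and both $\sigma_1(x), \sigma_1(x') \ge 0$ by the condition $(s_1,\ldots,s_n),(s_1',\ldots,s_n') \in \mathcal H$ — note the first coordinate of $\Phi(z)$ is $\sigma_1(z)$ since $\sigma_1$ is a real embedding. Hence $\sigma_1(x) = \sigma_1(x') = 0$, so $x = 0$ (as $\sigma_1$ is injective), and therefore $x' = 0 = x$; the two representatives coincide. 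The main obstacle, such as it is, is this boundary case: one must make sure that on the hyperplane $\sigma_1 = 0$ no two genuinely distinct points of $\calC(\a,B)$ are negatives of each other, which comes down to the fact that the only element of $\a$ killed by the real embedding $\sigma_1$ is $0$. Combining the three steps yields the claimed complete set of representatives.
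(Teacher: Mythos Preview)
Your first two steps are correct in spirit and match the paper's argument, but you have confused two different ``first coordinates.'' The half-space $\mathcal H$ is applied to the coefficient vector $(s_1,\ldots,s_n)$, so the condition $(s_1,\ldots,s_n)\in\mathcal H$ means $s_1\ge 0$; it does \emph{not} mean that the first coordinate of $\Phi(x)$, namely $\sigma_1(x)$, is non-negative. These are unrelated in general, since $\sigma_1(x)=\sum_i s_i\,\sigma_1(\omega_i)$ can have either sign regardless of the sign of $s_1$. (Moreover, if $K$ is totally imaginary there is no real embedding $\sigma_1$ at all, so your argument would not even parse.) The paper's proof simply replaces $x$ by $-x$ whenever $s_1<0$, which forces $s_1\ge 0$ directly; your surjectivity step is fine once you make this correction.

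Your injectivity step, however, cannot be repaired, because the claim itself is false: the set in the proposition is \emph{not} a system of distinct representatives. If $x\in\calC(\a,B)$ has coefficient vector with $s_1=0$ and $x\ne 0$, then both $x$ and $-x$ have coefficient vectors in $\mathcal H$ and satisfy all the other conditions, so both appear in the set. For instance, take $K=\Q(\sqrt 2)$, $\a=\o_K$, integral basis $\{1,\sqrt 2\}$: then $x=\sqrt 2$ has coefficient vector $(0,1)$ and $-x$ has $(0,-1)$, both lying in $\mathcal H$. The paper's proof establishes only that every orbit is represented at least once, which is all that is needed for Algorithm~\ref{omega_aB_alg_improved}; the phrase ``complete set of representatives'' is being used in that weaker sense, and the algorithm already anticipates removing duplicates.
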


\begin{proof}
Note to begin that, by definition, all elements $x$ of the above form indeed belong to $\calC(\a,B)$. We claim that they represent all the orbits of the action of $\{\pm 1\}$. Let $x$ be any element of $\calC(\a,B)$ and write 
\[x=s_1\omega_1+\cdots+s_n\omega_n\]
for some integers $s_1,\ldots, s_n$. Replacing $x$ with $-x$ if necessary, we may assume that $s_1\ge 0$. Since $\Phi(x)\in\calP'(\a,B)$, then \eqref{polytope_trans} implies that $(s_1,\ldots, s_n)\in\calX$ and hence $(s_1,\ldots, s_n)\in \Z^n\cap\mathcal H\cap\calX$. It follows that $x$ is one of the elements listed in the statement of the proposition.
\end{proof}

From Proposition \ref{C_a_B_pm_reps} and the preceding discussion we obtain the following algorithm.

\begin{alg}\label{C_a_B_alg_first}Computing $\calC(\a,B)/\{\pm 1\}$\mbox{}
\begin{enumerate}
\item Create an empty list $L$.
\item Compute an LLL-reduced system of fundamental units in $K$.
\item Compute the numbers $D_v$ for all places $v|\infty$.
\item Construct the polytope $\calP'(\a,B)$.
\item Compute an integral basis $\omega_1,\ldots, \omega_n$ for $\a$.
\item Let $S$ be the $n\times n$ matrix with column vectors $\Phi(\omega_i)$.
\item Construct the polytope $\calX=S^{-1}(\calP'(\a,B))$.
\item Find all integer lattice points in the polytope $\mathcal H\cap \calX$.
\item For all such points $(s_1,\ldots, s_n)$:
\begin{enumerate}
\item Let $x=s_1\omega_1+\cdots+s_n\omega_n$. 
\item If $\Phi(x)\in\calP(\a,B)$ and $|N_{K/\Q}(x)|\le B\cdot N(\a)$, then include $x$ in $L$.
\end{enumerate}
\item Return the list $L$.
\end{enumerate}
\end{alg}

\subsubsection{Second approach}\label{norm_approach} One possible issue with the method for computing $\calC(\a,B)$ discussed above is that in practice the set $\widetilde\calC(\a,B)$ can be significantly larger than $\calC(\a,B)$, so that the step of computing the norms of all elements of $\widetilde\calC(\a,B)$ is rather inefficient. We will therefore propose a different approach which reduces the number of norm computations needed. There is, however, a trade-off between the two approaches, since the second may require a substantial number of arithmetic operations with fundamental units. If $K$ is a number field for which these units are extremely large, it may be better to use the first method. 

In the second approach, rather than first finding all elements $x\in\a$ with $\Phi(x)\in\calP(\a,B)$ and then checking the condition $|N_{K/\Q}(x)|\le B\cdot N(\a)$, we first find elements $x$ satisfying this norm bound and then check whether $\Phi(x)\in\calP(\a,B)$. In general, there will be infinitely many elements $x\in\a$ with $|N_{K/\Q}(x)|\le B\cdot N(\a)$, since any such $x$ can be multiplied by a unit in $\o_K$ to obtain another element of $\a$ with equal norm. However, there are only finitely many possibilities for the ideal generated by $x$, since there are only finitely many ideals of bounded norm in $\o_K$. We will show here that the set $\calC(\a,B)$ can be determined by computing a finite list of principal ideals and a finite set of units. This approach is based on ideas first introduced in \cite{doyle-krumm}.

Let $\calI(\a,B)$ be a set of generators for all the nonzero principal ideals that are contained in $\a$ and whose norms are at most $B\cdot N(\a)$. We assume that distinct elements of $\calI(\a,B)$ generate distinct ideals. The elements of $\calI(\a,B)$ can be determined by using known methods for solving norm equations in number fields: applying the algorithm described in \cite{fincke-pohst1} (see also \cite{fincke-pohst2} and \cite[\S 5.3, \S 6.4]{pohst-z}) one can find generators for all principal ideals of $\o_K$ whose norms are of the form $k\cdot N(\a)$ with $1\le k\le B$. Keeping only those generators that belong to $\a$ we obtain $\calI(\a,B)$.

For every place $v|\infty$ we define real numbers $A_v$ and $L_v$ by the formulas
\begin{align*}
A_v &=\min_{y\in\calI(\a,B)}\Lambda(y)_v, \\
L_v &=(n_v/n)(\log B + \log N(\a))+D_v-A_v. 
\end{align*}

\begin{lem}\label{c_aB_xrep} Every nonzero element $x\in\calC(\a,B)$ can be written as $x=u\cdot y$, where $u\in \o_K^{\ast}$, $y\in\calI(\a,B)$, and
\begin{equation}\label{u_bounds}
\|u\|_v\le (B\cdot N(\a))^{n_v/n}\exp(D_v-A_v) \hspace{5mm} \forall\;v|\infty.
\end{equation}
\end{lem}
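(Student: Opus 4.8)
The plan is to start from an arbitrary nonzero $x\in\calC(\a,B)$ and use the structure of $\calC(\a,B)$ to locate a generator of the ideal $(x)$ inside the prescribed set $\calI(\a,B)$. Since $x\in\a$, the principal ideal $(x)$ is contained in $\a$; moreover, $|N_{K/\Q}(x)|\le B\cdot N(\a)$ by definition of $\calC(\a,B)$, so $N((x))=|N_{K/\Q}(x)|\le B\cdot N(\a)$. Hence $(x)$ is one of the principal ideals whose generators appear in $\calI(\a,B)$, and there is some $y\in\calI(\a,B)$ with $(y)=(x)$. Two generators of the same nonzero ideal differ by a unit, so $x=u\cdot y$ for some $u\in\o_K^{\ast}$; the only remaining task is to verify the bound \eqref{u_bounds} on $\|u\|_v$.

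For the norm bound, the key identity is multiplicativity of $\|\;\|_v$: from $x=u\cdot y$ we get $\|u\|_v=\|x\|_v/\|y\|_v$ for every $v|\infty$, equivalently $\log\|u\|_v=\Lambda(x)_v-\Lambda(y)_v$. I would bound the two terms separately. For the numerator, the condition $\Phi(x)\in\calP(\a,B)$ unwinds (just as in the last line of the proof of Theorem \ref{main_alg_thm}) to $\|x\|_v\le (B\cdot N(\a))^{n_v/n}\exp(D_v)$, i.e. $\Lambda(x)_v\le (n_v/n)\log(B\cdot N(\a))+D_v$. For the denominator, the definition $A_v=\min_{y'\in\calI(\a,B)}\Lambda(y')_v$ gives $\Lambda(y)_v\ge A_v$, so $-\Lambda(y)_v\le -A_v$. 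Combining, $\log\|u\|_v\le (n_v/n)\log(B\cdot N(\a))+D_v-A_v$, which is exactly the logarithm of \eqref{u_bounds}; exponentiating finishes the estimate. (One notes in passing that $L_v$, as defined just before the lemma, is precisely this upper bound on $\log\|u\|_v$, which is presumably why it is introduced here and will be used in the subsequent algorithm to cut down the set of candidate units.)

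There is essentially no hard step: the argument is a bookkeeping exercise once the right objects ($\calI(\a,B)$, $A_v$, and the already-established unwinding of $\Phi(x)\in\calP(\a,B)$) are in place. The only point requiring a little care is the first one — checking that $(x)$ really is represented in $\calI(\a,B)$ — which rests on the two defining properties of $\calC(\a,B)$, namely membership in $\a$ and the norm bound, together with the stipulation that $\calI(\a,B)$ contains a generator for \emph{every} principal ideal contained in $\a$ of norm at most $B\cdot N(\a)$. With that observation the rest is immediate from multiplicativity of the local norms and the definition of $A_v$.
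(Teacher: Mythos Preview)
Your proposal is correct and follows essentially the same approach as the paper: locate a generator $y\in\calI(\a,B)$ of the ideal $(x)$ using the norm bound (and containment in $\a$), write $x=uy$, then bound $\|u\|_v=\|x\|_v/\|y\|_v$ via the defining inequality of $\calP(\a,B)$ for the numerator and the definition of $A_v$ for the denominator. If anything, you are slightly more explicit than the paper in noting that $x\in\a$ is needed to ensure $(x)\subseteq\a$ so that $(x)$ is indeed represented in $\calI(\a,B)$.
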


\begin{proof} Let $x\in\calC(\a,B)$ be any nonzero element. Since $|N_{K/\Q}(x)|\le B\cdot N(\a)$, the ideal generated by $x$ has norm at most $B\cdot N(\a)$, so $x$ must be associate to an element of $\calI(\a,B)$. Hence, we can write $x=uy$ for some unit $u$ and some $y\in\calI(\a,B)$. By definition, the fact that $\Phi(x)\in\calP(\a,B)$ means that $\|x\|_v\le (B\cdot N(\a))^{n_v/n}\exp(D_v)$ for every place $v|\infty$. Therefore, 
\[\|u\|_v=\|x\|_v\|y\|^{-1}_v\le (B\cdot N(\a))^{n_v/n}\exp(D_v)\exp(-A_v).\qedhere\]
\end{proof}

The above lemma shows that we can determine $\calC(\a,B)$ if we find all units $u$ satisfying the bounds \eqref{u_bounds}; to do this, we reduce the problem to one of finding integer points inside a polytope.  Let $\calU(\a,B)\subset\R^r$ be the polytope consisting of all points $(t_1,\ldots, t_r)$ that satisfy the inequalities $t_j  \le L_{v_j}$ for $1\le j\le r$, and $t_1+\cdots+t_r  \ge -L_{v_{r+1}}.$ Let $\pi:\R^{r+1}\to\R^r$ be the linear map that deletes the last coordinate, and let $\tilde\Lambda=\pi\circ\Lambda:K^{\ast}\to\R^r$. It is a standard fact that the image of the unit group $\o_K^{\ast}$ under $\tilde\Lambda$ is a lattice of full rank in $\R^r$ generated by the vectors $\tilde\Lambda(\epsilon_1),\ldots, \tilde\Lambda(\epsilon_r)$. Let $T$ be the $r\times r$ matrix having these vectors as columns. 

\begin{lem}\label{unit_polytope_lemma} 
For every unit $u\in\o_K^{\ast}$ satisfying the bounds \eqref{u_bounds} there exist a root of unity $\zeta\in\mu_K$ and an integer tuple $(n_1,\ldots, n_r)\in T^{-1}(\calU(\a,B))$ such that $u=\zeta\epsilon_1^{n_1}\cdots\epsilon_r^{n_r}$.
\end{lem}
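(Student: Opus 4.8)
The claim is essentially a restatement of Dirichlet's unit theorem combined with the way the polytope $\calU(\a,B)$ was manufactured, so the argument should be short. I would start from a unit $u\in\o_K^{\ast}$ satisfying the bounds \eqref{u_bounds}. By Dirichlet's theorem in the form recalled in \S\ref{mink_theory_section}, the map $\tilde\Lambda$ sends $\o_K^{\ast}$ onto a full-rank lattice in $\R^r$ with basis $\tilde\Lambda(\epsilon_1),\ldots,\tilde\Lambda(\epsilon_r)$, and the kernel of $\Lambda$ (hence of $\tilde\Lambda$, since $\Lambda(x)=0$ forces the last coordinate to vanish too) is exactly $\mu_K$. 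Therefore there is an integer tuple $(n_1,\ldots,n_r)\in\Z^r$ with $\tilde\Lambda(u)=n_1\tilde\Lambda(\epsilon_1)+\cdots+n_r\tilde\Lambda(\epsilon_r)$, i.e. $u\,\epsilon_1^{-n_1}\cdots\epsilon_r^{-n_r}\in\ker\tilde\Lambda=\mu_K$, which gives the desired factorization $u=\zeta\epsilon_1^{n_1}\cdots\epsilon_r^{n_r}$ with $\zeta\in\mu_K$. In matrix terms, since $T$ has the $\tilde\Lambda(\epsilon_j)$ as columns, $\tilde\Lambda(u)=T\cdot(n_1,\ldots,n_r)^{t}$, so $(n_1,\ldots,n_r)^{t}=T^{-1}\tilde\Lambda(u)$.

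It remains to check that this tuple lies in $T^{-1}(\calU(\a,B))$, equivalently that $\tilde\Lambda(u)\in\calU(\a,B)$. Write $\tilde\Lambda(u)=(\Lambda(u)_{v_1},\ldots,\Lambda(u)_{v_r})$, where $\Lambda(u)_v=\log\|u\|_v$. Taking logarithms in \eqref{u_bounds} gives, for every $v\mid\infty$,
\[
\Lambda(u)_v\le (n_v/n)(\log B+\log N(\a))+D_v-A_v=L_v,
\]
which is exactly the first family of inequalities $t_j\le L_{v_j}$ defining $\calU(\a,B)$ (for $1\le j\le r$). For the remaining inequality $t_1+\cdots+t_r\ge -L_{v_{r+1}}$, I would use the fact that $\sum_{v\mid\infty}\Lambda(u)_v=\sum_{v\mid\infty}\log\|u\|_v=\log\prod_{v\mid\infty}\|u\|_v=0$ for a unit (the product formula, since $\|u\|_v=1$ at all finite $v$). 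Hence
\[
\Lambda(u)_{v_1}+\cdots+\Lambda(u)_{v_r}=-\Lambda(u)_{v_{r+1}}\ge -L_{v_{r+1}},
\]
the last step again being the bound \eqref{u_bounds} at the place $v_{r+1}$ after taking logarithms. This establishes $\tilde\Lambda(u)\in\calU(\a,B)$ and completes the proof.

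I do not anticipate a genuine obstacle here; the only point requiring a little care is bookkeeping — making sure the indexing of places $v_1,\ldots,v_{r+1}$ matches the inequalities written down in the definition of $\calU(\a,B)$, and that the factor-of-$n_v$ conventions in $\Lambda$, in the $D_v$, and in \eqref{u_bounds} are used consistently (recall $\Lambda(x)_v=\log\|x\|_v=n_v\log|\sigma_v(x)|_{\C}$). One should also note that the case $x=0$ is excluded in the lemma's hypothesis, so $u$ is a genuine unit and all logarithms are finite.
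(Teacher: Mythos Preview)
Your argument is correct and follows essentially the same route as the paper's proof: show $\tilde\Lambda(u)\in\calU(\a,B)$ by taking logarithms in \eqref{u_bounds} and using $\sum_v\Lambda(u)_v=0$, then invoke Dirichlet's theorem to write $u=\zeta\epsilon_1^{n_1}\cdots\epsilon_r^{n_r}$ and apply $T^{-1}$. The only minor slip is the parenthetical about kernels, where the implication should read ``$\tilde\Lambda(u)=0$ forces the last coordinate of $\Lambda(u)$ to vanish too (via the sum-zero relation for units)'' rather than the reverse; this is a phrasing issue, not a mathematical one.
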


\begin{proof} Let $t_1,\ldots, t_r$ be the coordinates of the vector $\tilde\Lambda(u)$. By definition we have $t_j=\log\|u\|_{v_j}$, so  \eqref{u_bounds} implies that $t_j\le L_{v_j}$. Moreover, the sum of the coordinates of $\Lambda(u)$ is 0, so 
\[t_1+\cdots+t_r=-\log\|u\|_{v_{r+1}}\geq -L_{v_{r+1}},\]
and hence $\tilde\Lambda(u)\in\calU(\a,B)$. Write $u=\zeta\epsilon_1^{n_1}\cdots\epsilon_r^{n_r}$ with $\zeta\in \mu_K$ and $n_1,\ldots,n_r\in\Z$. Then
\[n_1\tilde\Lambda(\epsilon_1)+\cdots+n_r\tilde\Lambda(\epsilon_r)=\tilde\Lambda(u)\in \calU(\a,B).\] 
Applying the linear map $T^{-1}$ we conclude that $(n_1,\ldots, n_r)\in T^{-1}(\calU(\a,B))$, and this proves the lemma.
\end{proof}

In view of Lemma \ref{unit_polytope_lemma}, the problem of computing $\calC(\a,B)$ is now reduced to that of  finding all integer points inside the polytope $T^{-1}(\calU(\a,B))$. As mentioned earlier, this kind of problem can be solved using the algorithm developed in \cite{latte}. 

Putting together our results in this section we obtain the following method for computing $\calC(\a,B)$:  first, the set $\calI(\a,B)$ is determined, and using this the numbers $A_v$ and $L_v$ are computed for every place $v|\infty$. (This requires previous knowledge of a system of fundamental units in $K$ from which the numbers $D_v$ are computed.) The polytope $T^{-1}(\calU(\a,B))$ is then constructed, and all integer lattice points inside it are found. For every such integer point $(n_1,\ldots, n_r)$, and for every root of unity $\zeta\in \mu_K$, we then construct all numbers of the form $x=\zeta\epsilon_1^{n_1}\cdots\epsilon_r^{n_r}y$ with $y\in\calI(\a,B)$. If $\Phi(x)\in\calP(\a,B)$, then we keep $x$ because it is an element of $\calC(\a,B)$; otherwise $x$ is discarded.

Using the above procedure we can compute the set $\calC(\a,B)$, and this could then be used to compute $\Omega(\a,B)$. However, to compute $\Omega(\a,B)$ using Algorithm \ref{omega_aB_alg_improved} it is enough to determine representatives for the orbit space $\calC(\a,B)/\mu_K$ instead of computing all of $\calC(\a,B)$. A small change to the method described above will allow us to compute only these representatives.

\begin{prop}\label{c_a_B_mod_mu_prop} Let $g_1,\ldots, g_t$ be all the numbers of the form $g=\epsilon_1^{n_1}\cdots\epsilon_r^{n_r}y$
with $y\in \calI(\a,B)$ and $(n_1,\ldots, n_r)\in \Z^r\cap T^{-1}(\calU(\a,B))$ satisfying $\Phi(g)\in\calP(\a,B)$. Then the numbers $0,g_1,\ldots, g_t$ form a complete set of representatives for the orbit space $\calC(\a,B)/\mu_K$.
\end{prop}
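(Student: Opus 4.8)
The plan is to show two things: first, that each $g_i$ listed in the statement genuinely lies in $\calC(\a,B)$ (so that the proposed list is a list of elements of $\calC(\a,B)$), and second, that every element of $\calC(\a,B)$ is $\mu_K$-equivalent to exactly one of $0, g_1,\ldots, g_t$. The nonzero case and the zero case should be handled separately; the element $0$ is in $\calC(\a,B)$ and forms its own $\mu_K$-orbit, so it must be included once and for all on its own.

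For membership, I would argue as follows. Fix $i$ and write $g_i = \epsilon_1^{n_1}\cdots\epsilon_r^{n_r}\, y$ with $y\in\calI(\a,B)$ and $(n_1,\ldots,n_r)\in\Z^r\cap T^{-1}(\calU(\a,B))$ and $\Phi(g_i)\in\calP(\a,B)$. Since $y\in\calI(\a,B)$, the ideal $(y)$ is contained in $\a$, hence $y\in\a$, and since $g_i$ differs from $y$ by a unit, $g_i\in\a$ as well; moreover $(g_i)=(y)$ has norm at most $B\cdot N(\a)$, so $|N_{K/\Q}(g_i)|\le B\cdot N(\a)$. Combined with the hypothesis $\Phi(g_i)\in\calP(\a,B)$, this is exactly the definition of $\calC(\a,B)$, so $g_i\in\calC(\a,B)$.

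For the covering and distinctness claim, let $x\in\calC(\a,B)$ be arbitrary. If $x=0$ it is represented by $0$, so assume $x\neq0$. By Lemma \ref{c_aB_xrep} we can write $x=u\,y$ with $u\in\o_K^{\ast}$, $y\in\calI(\a,B)$, and $u$ satisfying the bounds \eqref{u_bounds}. By Lemma \ref{unit_polytope_lemma} there is a root of unity $\zeta\in\mu_K$ and an integer tuple $(n_1,\ldots,n_r)\in\Z^r\cap T^{-1}(\calU(\a,B))$ with $u=\zeta\,\epsilon_1^{n_1}\cdots\epsilon_r^{n_r}$. Setting $g=\epsilon_1^{n_1}\cdots\epsilon_r^{n_r}y$, we get $x=\zeta g$, so $\zeta^{-1}x=g$. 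Now I need to verify that $g$ actually appears on the list, i.e. that $\Phi(g)\in\calP(\a,B)$: since $\Phi(x)\in\calP(\a,B)$ and the defining inequalities of $\calP(\a,B)$ only constrain the absolute values $|s_v|$, and $|s_v(g)|=|s_v(\zeta^{-1}x)|=|s_v(x)|$ because $\zeta$ is a root of unity (hence $\|\zeta\|_v=1$ for all $v\mid\infty$), we conclude $\Phi(g)\in\calP(\a,B)$. Therefore $g=g_j$ for some $j$, and $x$ lies in the $\mu_K$-orbit of $g_j$. For distinctness of orbits: if $g_i$ and $g_j$ lie in the same $\mu_K$-orbit, then $g_i = \zeta g_j$ for some $\zeta\in\mu_K$, so the principal ideals $(g_i)$ and $(g_j)$ coincide. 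But $g_i$ is associate to an element $y_i\in\calI(\a,B)$ and $g_j$ to $y_j\in\calI(\a,B)$ (in each case the unit used is a product of fundamental units, which are units), and distinct elements of $\calI(\a,B)$ generate distinct ideals by assumption; hence $y_i = y_j$. One then checks that the exponent tuples must also agree, using that $\tilde\Lambda$ is injective on the lattice generated by the $\epsilon_j$ — more precisely, $g_i=\zeta g_j$ forces $\epsilon_1^{n_1-m_1}\cdots\epsilon_r^{n_r-m_r}=\zeta$, and applying $\tilde\Lambda$ gives $\sum (n_k-m_k)\tilde\Lambda(\epsilon_k)=0$; since the $\tilde\Lambda(\epsilon_k)$ are linearly independent, $n_k=m_k$ for all $k$, so $g_i=g_j$.

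I expect the main subtlety — rather than a genuine obstacle — to be the bookkeeping around the root of unity: one must be careful that although the construction of the $g_i$ discards the $\mu_K$-factor entirely (as opposed to Proposition \ref{C_a_B_pm_reps}, which only quotiented by $\{\pm1\}$), absorbing $\zeta$ into $g$ does not move $\Phi(g)$ out of $\calP(\a,B)$; this is exactly where the fact that $\calP(\a,B)$ is cut out by conditions on $\|\cdot\|_v$, together with $\|\zeta\|_v=1$ for roots of unity, is used. The distinctness argument also requires invoking the standing assumption that distinct elements of $\calI(\a,B)$ generate distinct ideals; without it the list could contain redundancies. Everything else follows directly from Lemmas \ref{c_aB_xrep} and \ref{unit_polytope_lemma} and the definition of $\calC(\a,B)$.
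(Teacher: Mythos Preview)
Your proof is correct and follows essentially the same approach as the paper: invoke Lemmas \ref{c_aB_xrep} and \ref{unit_polytope_lemma} to write a nonzero $x\in\calC(\a,B)$ as $\zeta\,\epsilon_1^{n_1}\cdots\epsilon_r^{n_r}y$, then observe that $g=\epsilon_1^{n_1}\cdots\epsilon_r^{n_r}y$ still lies in $\calC(\a,B)$ and hence appears on the list. The paper is terser---it packages your direct verification that $\Phi(g)\in\calP(\a,B)$ as the already-noted fact that $\mu_K$ acts on $\calC(\a,B)$, and it compresses the distinctness argument into the single remark that no two elements of $\calI(\a,B)$ are associate---but your more explicit treatment (in particular, spelling out why the exponent tuples must coincide via the linear independence of the $\tilde\Lambda(\epsilon_k)$) fills in details the paper leaves to the reader.
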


\begin{proof} Note first of all that, by construction, the numbers $0,g_1,\ldots, g_t$ all belong to $\calC(\a,B)$. Moreover, since no two elements of $\calI(\a,B)$ are associate, the orbits of $0,g_1,\ldots, g_t$ are all distinct. Let $x\in\calC(\a,B)$ be nonzero. By Lemmas \ref{c_aB_xrep} and \ref{unit_polytope_lemma} there exist $y\in\calI(\a,B)$ and $(n_1,\ldots, n_r)\in \Z^r\cap T^{-1}(\calU(\a,B))$ such that $x=\zeta\epsilon_1^{n_1}\cdots\epsilon_r^{n_r}y$ for some $\zeta\in\mu_K$. Thus, $x$ is in the $\mu_K$-orbit of the element $g=\epsilon_1^{n_1}\cdots\epsilon_r^{n_r}y$. Since $\mu_K$ acts on $\calC(\a,B)$, this implies that $g\in\calC(\a,B)$, so in particular $\Phi(g)\in\calP(\a,B)$. It follows that $g\in\{g_1,\ldots, g_t\}$, proving that $x$ is in the $\mu_K$-orbit of one of the numbers $g_1,\ldots, g_t$.
\end{proof}

We summarize the results of this section in the following algorithm.

\begin{alg}\label{C_a_B_alg_second}Computing $\calC(\a,B)/\mu_K$\mbox{}
\begin{enumerate}
\item Create a list $L$ containing only the element 0.
\item Compute $\calI(\a,B)$ by solving norm equations as described above.
\item Compute an LLL-reduced system $\{\epsilon_1,\ldots,\epsilon_r\}$ of fundamental units in $K$.
\item Compute the numbers $D_v$, $A_v$, and  $L_v$ for every place $v|\infty$.
\item Let $T$ be the $r\times r$ matrix with column vectors $\tilde\Lambda(\epsilon_1),\ldots, \tilde\Lambda(\epsilon_r)$.
\item Construct the polytope $T^{-1}(\calU(\a,B))$.
\item Find all integer lattice points inside $T^{-1}(\calU(\a,B))$.
\item For all such points $(n_1,\ldots,n_r)$, and for every element $y\in \calI(\a,B)$:
\begin{enumerate}
\item Let $g=\epsilon_1^{n_1}\cdots\epsilon_r^{n_r}y$.
\item If $\Phi(g)\in\calP(\a,B)$, then include $g$ in $L$.
\end{enumerate}
\item Return the list $L$.
\end{enumerate}
\end{alg}

Algorithms \ref{omega_aB_alg_improved}, \ref{C_a_B_alg_first}, and \ref{C_a_B_alg_second} provide two different ways of finding points of bounded height with specified ideal class. We turn now to the more general problem of determining all points of bounded height in $\P^N(K)$.

\section{A search space for all points of bounded height}\label{searchspace_section}
Given a real number $B\ge 1$, we wish to determine all points in the set
\[\Omega(B)=\{P\in\P^N(K): H_K(P)\le B\}.\]
If $\a_1,\ldots, \a_h$ are ideals representing the distinct ideal classes of $\o_K$, then we have
\[\Omega(B) = \bigcup_{i=1}^h\Omega(\a_i,B),\]
the union being disjoint. In order to compute $\Omega(B)$ it therefore suffices to determine ideal class representatives $\a_i$ as above and then compute $\Omega(\a_i,B)$ for every index $i$. The computational cost of obtaining the ideals $\a_i$ can be high if $K$ has very large discriminant; see \cite[Thm. 5.5]{lenstra} for a precise statement of the complexity of a deterministic algorithm. If one is willing to assume the Generalized Riemann Hypothesis, much faster methods are available: see, for instance, \cite{buchmann}. The main algorithms of this article include the computation of the class group as a required step; however, no assumptions are made as to which method is used for this. 

Once the ideal class representatives $\a_1,\ldots, \a_h$ have been determined, what remains in order to obtain $\Omega(B)$ is to compute $\Omega(\a_i,B)$ for every index $i$; this can be done by applying Algorithm \ref{omega_aB_alg_improved}. Though this approach to computing $\Omega(B)$ would certainly work, there are simple modifications that can be made to shorten the computation. The crucial step for finding all the points in $\Omega(\a_i,B)$ using Algorithm \ref{omega_aB_alg_improved} is to compute the set $\calC(\a_i,B)\subset\o_K$. Thus, in the process of determining all points in $\Omega(B)$ as described above, one would compute $\calC(\a_i,B)$ for every $i$. In practice there can be a significant amount of overlap between the various sets $\calC(\a_1,B), \ldots, \calC(\a_h,B)$,  so it can happen that the same elements of $\o_K$ are being computed several times. In order to avoid this redundancy, we will carry out one computation of a set $\calC(B)$ that contains all of the sets $\calC(\a_i,B)$, and then for each $i$ the elements of $\calC(\a_i,B)$ will be found by searching through $\calC(B)$. 

Let $\bN=\max_iN(\a_i)$ and let $\calP(B)$ be the subset of $\prod_{v|\infty}K_v\;\cong\;\R^n$ consisting of all points $(s_v)$ such that 
\[|s_v|\le\left(B\cdot \bN\right)^{1/n}\exp(D_v/n_v)\hspace{5mm} \forall\;v|\infty.\]
Note that $\calP(B)$ contains $\calP(\a_i, B)$ for every $i$. Define a set $\calN$ of non-negative integers by
\[\calN=\{0\}\cup\bigcup_{i=1}^h\{k\cdot N(\a_i):1\le k\le B\},\]
and let
\[\calC(B)=\{x\in\o_K: \Phi(x)\in \calP(B) \text{ and } |N_{K/\Q}(x)|\in\calN\},\]
so that $\calC(B)$ contains $\calC(\a_i, B)$ for every $i$. The methods of \S\ref{C_aB_section} can be easily adapted to compute $\calC(B)$. Once this has been done, the various sets $\calC(\a_i, B)$ can be determined by checking, for every element $x\in\calC(B)$, whether $x$ satisfies the conditions $\Phi(x)\in \calP(\a_i,B)$ and $|N_{K/\Q}(x)|\le B\cdot N(\a_i)$. The sets $\calC(\a_1,B), \ldots, \calC(\a_h,B)$ are thus obtained, and can then be used to determine all points in the sets $\Omega(\a_i,B)$. Now, in order to compute $\Omega(\a_i,B)$ using Algorithm \ref{omega_aB_alg_improved} it suffices to find representatives for the orbit spaces $\calC(\a_i,B)/\mu_K$ or $\calC(\a_i,B)/\{\pm 1\}$, so it would be desirable that group actions on $\calC(B)$ could be used to find these representatives instead of computing the entire set $\calC(\a_i, B)$. This can indeed be done without any additional work: the groups $\mu_K$ and $\{\pm 1\}$ act on $\calC(B)$, and with minor changes the methods of \S\ref{C_aB_section} can be used to compute representatives for the orbits of these actions; the details of this are discussed below. By selecting the elements that belong to $\calC(\a_i,B)$ for each $i$ we obtain representatives for the orbit spaces $\calC(\a_i,B)/\mu_K$ or $\calC(\a_i,B)/\{\pm 1\}$. These representatives can then be used in Algorithm \ref{omega_aB_alg_improved} to compute $\Omega(\a_i,B)$.

Depending on which orbit space is computed, $\calC(B)/\{\pm 1\}$ or $\calC(B)/\mu_K$, our discussion above yields a different method to 
compute $\Omega(B)$. We will henceforth denote by M1 the algorithm that uses the action of $\{\pm 1\}$, and by M2 the algorithm that uses the action of $\mu_K$. 

For the algorithm M1, the methods of \S\ref{phi_approach} should be applied to compute $\calC(B)/\{\pm 1\}$. Let $\calP'(B)\subset\R^n$ be the region consisting of all points 
\[(a_1,\ldots, a_{r_1};x_1,y_1,\ldots, x_{r_2},y_{r_2})\in\R^n\] such that 
\[|a_i|\le\left(B\cdot \bN\right)^{1/n}\exp(D_{\sigma_i})\;\;\text{and}\;\;\max\{|x_j|,|y_j|\}\le\left(B\cdot \bN\right)^{1/n}\exp(D_{\tau_j}/2)\;\;\]
for all indices $1\le i\le r_1$ and $1\le j\le r_2$.  Note that $\calP'(B)$ contains $\calP(B)$. Let $\{\omega_1,\ldots, \omega_n\}$ be an integral basis for $\o_K$, and let $S$ be the $n\times n$ matrix with column vectors $\Phi(\omega_1),\ldots, \Phi(\omega_n)$. Finally, let $\calX$ be the polytope $S^{-1}(\calP'(B))$. A minor modification of the proof of Proposition \ref{C_a_B_pm_reps} yields the following result.

\begin{prop}\label{C_B_pm_reps} Let $\mathcal H\subset\R^n$ be the half-space consisting of all points whose first coordinate is non-negative. A complete set of representatives for the orbit space $\calC(B)/\{\pm 1\}$ is given by all numbers of the form
\[x=s_1\omega_1+\cdots+s_n\omega_n\]
with $(s_1,\ldots, s_n)\in \Z^n\cap\mathcal H\cap\calX$ satisfying $\Phi(x)\in\calP(B)$ and $|N_{K/\Q}(x)|\in\calN$.
\end{prop}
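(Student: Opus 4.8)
The plan is to imitate, almost verbatim, the argument already given for Proposition~\ref{C_a_B_pm_reps}, with the ideal $\a$ replaced by $\o_K$ and the single norm bound $B\cdot N(\a)$ replaced by the finite set $\calN$. There are two things to verify: first, that every number $x$ of the form displayed in the statement actually lies in $\calC(B)$; second, that these numbers meet every orbit of the action of $\{\pm 1\}$ on $\calC(B)$.

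The first point is essentially a matter of unwinding definitions. If $(s_1,\ldots,s_n)\in\Z^n$ and $x=s_1\omega_1+\cdots+s_n\omega_n$, then $x\in\o_K$ because $\{\omega_1,\ldots,\omega_n\}$ is an integral basis for $\o_K$; the two hypotheses $\Phi(x)\in\calP(B)$ and $|N_{K/\Q}(x)|\in\calN$ imposed in the statement are precisely the conditions defining membership in $\calC(B)$, so $x\in\calC(B)$.

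For the second point I would first record the analogue of \eqref{polytope_trans}: since $S$ has column vectors $\Phi(\omega_1),\ldots,\Phi(\omega_n)$ and $\calX=S^{-1}(\calP'(B))$, we have, for any integers $s_1,\ldots,s_n$, that $s_1\Phi(\omega_1)+\cdots+s_n\Phi(\omega_n)\in\calP'(B)$ if and only if $(s_1,\ldots,s_n)\in\calX$; this is immediate from the definition of $S$ and linearity. Now take an arbitrary $x\in\calC(B)$ and write $x=s_1\omega_1+\cdots+s_n\omega_n$ with integers $s_i$. Because $\calP(B)$ is symmetric about the origin and the absolute norm is invariant under $x\mapsto -x$, the group $\{\pm 1\}$ acts on $\calC(B)$, so we may replace $x$ by $-x$ if necessary to arrange $s_1\ge 0$, i.e.\ $(s_1,\ldots,s_n)\in\mathcal H$. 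Since $\Phi(x)\in\calP(B)\subseteq\calP'(B)$, the translation statement gives $(s_1,\ldots,s_n)\in\calX$, hence $(s_1,\ldots,s_n)\in\Z^n\cap\mathcal H\cap\calX$, while $\Phi(x)\in\calP(B)$ and $|N_{K/\Q}(x)|\in\calN$ hold because $x\in\calC(B)$. Thus $x$ (after the possible sign change, which does not alter its $\{\pm 1\}$-orbit) is one of the numbers listed in the proposition.

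I do not expect a genuine obstacle here: the only steps that deserve a word of justification are that $\calP(B)$ and $\calN$ are stable under negation — so that $\{\pm 1\}$ really acts on $\calC(B)$ — and the elementary translation identity relating lattice points of $\Phi(\o_K)$ in $\calP'(B)$ to integer points of $\calX$, both of which are immediate. As in the proof of Proposition~\ref{C_a_B_pm_reps}, I would not dwell on whether the listed set contains each orbit exactly once: elements with $s_1=0$ occur together with their negatives, and ``complete set of representatives'' is understood in the sense that every orbit is met.
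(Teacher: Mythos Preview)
Your proposal is correct and follows exactly the approach indicated in the paper, which simply states that ``a minor modification of the proof of Proposition~\ref{C_a_B_pm_reps} yields the following result.'' You have faithfully carried out that modification, replacing $\a$ by $\o_K$ and the norm bound $B\cdot N(\a)$ by the set $\calN$, and the brief justifications you add (symmetry of $\calP(B)$ and $\calN$ under negation, the analogue of \eqref{polytope_trans}) are all that is needed.
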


The above proposition suggests an algorithm for computing $\calC(B)/\{\pm 1\}$ that is analogous (in fact, nearly identical) to Algorithm \ref{C_a_B_alg_first}. 

From our work up to this point we obtain the following description of M1. Briefly, what the algorithm below does is to compute $\calC(B)/\{\pm 1\}$, then intersect with each set $\calC(\a_i,B)$ to determine $\calC(\a_i,B)/\{\pm 1\}$, and finally use Algorithm \ref{omega_aB_alg_improved} to compute each set $\Omega(\a_i,B)$.

\begin{alg}[M1]\label{M1_alg} Computing $\Omega(B)$ using the action of $\{\pm 1\}$.\mbox{}
\begin{enumerate}
\item Create an empty list $\mathcal L$. This list will store the points belonging to $\Omega(B)$.
\item Compute an integral basis $\omega_1,\ldots, \omega_n$ for $\o_K$.
\item Determine ideals $\a_1,\ldots, \a_h$ representing the distinct ideal classes of $\o_K$.
\item Compute an LLL-reduced system of fundamental units in $\o_K$.
\item Construct the set $\calN$.
\item Compute the numbers $D_v$ for all places $v|\infty$.
\item Construct the polytope $\calP'(B)$.
\item Let $S$ be the $n\times n$ matrix with column vectors $\Phi(\omega_i)$.
\item Construct the polytope $\calX=S^{-1}(\calP'(B))$.
\item Create an empty set $L$. This set will store representatives for $\calC(B)/\{\pm 1\}$.
\item Find all integer lattice points in the polytope $\mathcal H\cap \calX$.
\item For all such points $(s_1,\ldots, s_n)$:
\begin{enumerate}
\item Let $x=s_1\omega_1+\cdots+s_n\omega_n$. 
\item If $\Phi(x)\in\calP(B)$ and $|N_{K/\Q}(x)|\in\calN$, then include $x$ in $L$.
\end{enumerate}
\item For each ideal $\a\in\{\a_1,\ldots, \a_h\}$:
\begin{enumerate}
\item Fix an ordering $g_0,g_1,\ldots, g_t$ of the elements of $L\cap\calC(\a, B)$.
\item For every tuple of indices $(i_0,\ldots, i_N)$ such that $0\le i_0\le i_1\le\cdots\le i_N\le t$:
\begin{enumerate}
\item Let $X$ be the point $(g_{i_0},\ldots, g_{i_N})$.
\item If $\a$ is generated by $g_{i_0},\ldots, g_{i_N}$ and $H_{\infty}(X)\le B\cdot N(\a)$, then:

Include in $\mathcal L$ all the points in $\o(X)$.
\end{enumerate} 
\end{enumerate}
\item Return the list $\mathcal L$.
\end{enumerate}
\end{alg}

For the algorithm M2, the methods of \S\ref{norm_approach} should be applied to compute $\calC(B)/\mu_K$. Let $\calI(B)$ be a set of generators for all the nonzero principal ideals of $\o_K$ whose norms are in $\calN$. 
For every place $v|\infty$, define real numbers
\begin{align*}
a_v &=\min_{y\in\calI(B)}\Lambda(y)_v, \\
\ell_v &=(n_v/n)(\log B + \log \bN)+D_v-a_v. 
\end{align*}

Let $\calU(B)\subset\R^r$ be the polytope consisting of all points $(t_1,\ldots, t_r)$ that satisfy the inequalities $t_j  \le \ell_{v_j}$ for $1\le j\le r$, and $t_1+\cdots+t_r  \ge -\ell_{v_{r+1}}$. Finally, let $T$ be the $r\times r$ matrix with column vectors $\tilde\Lambda(\epsilon_1),\ldots, \tilde\Lambda(\epsilon_r)$. The results of \S\ref{norm_approach} can be modified in an obvious way to obtain the following result.

\begin{prop}\label{C_B_mod_mu_prop} Let $g_1,\ldots, g_t$ be all the numbers of the form $g=\epsilon_1^{n_1}\cdots\epsilon_r^{n_r}y$
with $y\in \calI(B)$ and $(n_1,\ldots, n_r)\in \Z^r\cap T^{-1}(\calU(B))$ satisfying $\Phi(g)\in\calP(B)$. Then the numbers $0,g_1,\ldots, g_t$ form a complete set of representatives for the orbit space $\calC(B)/\mu_K$.
\end{prop}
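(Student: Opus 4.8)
The plan is to follow the proof of Proposition \ref{c_a_B_mod_mu_prop} almost verbatim, after checking that the two lemmas underlying it --- Lemma \ref{c_aB_xrep} and Lemma \ref{unit_polytope_lemma} --- survive the passage from $\a$ to $\o_K$, from the single bound $B\cdot N(\a)$ to membership in the set $\calN$, and from the decorated objects $\calI(\a,B),A_v,L_v,\calU(\a,B),\calP(\a,B)$ to their undecorated analogues $\calI(B),a_v,\ell_v,\calU(B),\calP(B)$. First I would record the analogue of Lemma \ref{c_aB_xrep}: every nonzero $x\in\calC(B)$ can be written $x=u\cdot y$ with $u\in\o_K^{\ast}$, $y\in\calI(B)$, and $\|u\|_v\le (B\cdot\bN)^{n_v/n}\exp(D_v-a_v)$ for all $v\mid\infty$. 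The argument is unchanged: since $|N_{K/\Q}(x)|\in\calN$, the ideal $(x)$ is a nonzero principal ideal whose norm lies in $\calN$, so by definition of $\calI(B)$ the element $x$ is associate to some $y\in\calI(B)$; writing $x=uy$ and combining $\Phi(x)\in\calP(B)$ (which gives $\|x\|_v\le(B\cdot\bN)^{n_v/n}\exp(D_v)$) with $\log\|y\|_v=\Lambda(y)_v\ge a_v$ yields the stated bound on $\|u\|_v=\|x\|_v\|y\|_v^{-1}$.

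Next I would verify the analogue of Lemma \ref{unit_polytope_lemma}: any unit $u$ satisfying those bounds has the form $u=\zeta\epsilon_1^{n_1}\cdots\epsilon_r^{n_r}$ with $\zeta\in\mu_K$ and $(n_1,\ldots,n_r)\in T^{-1}(\calU(B))$. Again the proof is identical: the coordinates $t_j=\log\|u\|_{v_j}=\tilde\Lambda(u)_j$ satisfy $t_j\le\ell_{v_j}$ because $\log\bigl((B\cdot\bN)^{n_{v_j}/n}\exp(D_{v_j}-a_{v_j})\bigr)=\ell_{v_j}$ by the very definition of $\ell_v$, while $t_1+\cdots+t_r=-\log\|u\|_{v_{r+1}}\ge-\ell_{v_{r+1}}$ since the coordinates of $\Lambda(u)$ sum to zero; hence $\tilde\Lambda(u)\in\calU(B)$, and applying $T^{-1}$ to $\tilde\Lambda(u)=\sum_j n_j\tilde\Lambda(\epsilon_j)$ puts $(n_1,\ldots,n_r)$ in $T^{-1}(\calU(B))$. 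With these two facts the proposition follows as before: the numbers $0,g_1,\ldots,g_t$ all lie in $\calC(B)$ by construction; a nonzero $x\in\calC(B)$ factors as $x=\zeta\epsilon_1^{n_1}\cdots\epsilon_r^{n_r}y$ with $y\in\calI(B)$ and $(n_1,\ldots,n_r)\in\Z^r\cap T^{-1}(\calU(B))$, so $x$ lies in the $\mu_K$-orbit of $g:=\epsilon_1^{n_1}\cdots\epsilon_r^{n_r}y$; since $\mu_K$ acts on $\calC(B)$ --- the conditions $|s_v|\le(B\cdot\bN)^{1/n}\exp(D_v/n_v)$ and $|N_{K/\Q}(\cdot)|\in\calN$ defining $\calC(B)$ being invariant under multiplication by a root of unity --- we get $g\in\calC(B)$, hence $\Phi(g)\in\calP(B)$, hence $g\in\{g_1,\ldots,g_t\}$. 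Distinctness of the orbits of $0,g_1,\ldots,g_t$ comes from the assumption that no two elements of $\calI(B)$ generate the same ideal, together with the fact that distinct exponent tuples produce non-associate products of fundamental units.

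I do not anticipate a genuine obstacle here: the proof is a bookkeeping exercise tracking the substitutions above, and structurally it is a near-copy of the proof of Proposition \ref{c_a_B_mod_mu_prop}. The only points deserving a moment's care are (i) confirming that ``ideal of norm in $\calN$'' is exactly the class of ideals generated by $\calI(B)$, which holds by definition, and (ii) noting the one ingredient not needed in the $\a$-fixed setting, namely that $\mu_K$ acts on $\calC(B)$; this is immediate from the form of the defining inequalities, as indicated above. Accordingly I would present the write-up in the terse style used for the earlier analogue, stating the modified lemmas in passing and then reproducing the short orbit argument.
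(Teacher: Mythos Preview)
Your proposal is correct and is precisely the approach the paper takes: the paper does not spell out a proof of Proposition~\ref{C_B_mod_mu_prop} but simply states that the results of \S\ref{norm_approach} (Lemmas~\ref{c_aB_xrep} and~\ref{unit_polytope_lemma} and Proposition~\ref{c_a_B_mod_mu_prop}) ``can be modified in an obvious way,'' and your write-up carries out exactly those modifications. One small correction to your closing commentary: the fact that $\mu_K$ acts on the coordinate set is \emph{also} used in the $\a$-fixed proof of Proposition~\ref{c_a_B_mod_mu_prop} (see the line ``Since $\mu_K$ acts on $\calC(\a,B)$, this implies that $g\in\calC(\a,B)$''), so it is not a new ingredient here.
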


From the above proposition we deduce an algorithm for computing $\calC(B)/\mu_K$ that is analogous to Algorithm \ref{C_a_B_alg_second}. We include this algorithm in the following complete description of M2.

\begin{alg}[M2]\label{M2_alg}Computing $\Omega(B)$ using the action of $\mu_K$.\mbox{}
\begin{enumerate}
\item Create an empty list $\mathcal L$. This list will store the points belonging to $\Omega(B)$.
\item Determine ideals $\a_1,\ldots, \a_h$ representing the distinct ideal classes of $\o_K$.
\item Compute an LLL-reduced system $\{\epsilon_1,\ldots,\epsilon_r\}$ of fundamental units in $K$.
\item Construct the set $\calN$.
\item Compute the set $\calI(B)$ by solving norm equations.
\item Compute the numbers $D_v$, $a_v$, and  $\ell_v$ for every place $v|\infty$.
\item Let $T$ be the $r\times r$ matrix with column vectors $\tilde\Lambda(\epsilon_1),\ldots, \tilde\Lambda(\epsilon_r)$.
\item Construct the polytope $T^{-1}(\calU(B))$.
\item Create the set $L=\{0\}$. This set will store representatives for $\calC(B)/\mu_K$.
\item Find all integer lattice points inside $T^{-1}(\calU(B))$.
\item For all such points $(n_1,\ldots,n_r)$, and for every element $y\in \calI(B)$:
\begin{enumerate}
\item Let $g=\epsilon_1^{n_1}\cdots\epsilon_r^{n_r}y$.
\item If $\Phi(g)\in\calP(B)$, then include $g$ in $L$.
\end{enumerate}
\item For each ideal $\a\in\{\a_1,\ldots, \a_h\}$:
\begin{enumerate}
\item Fix an ordering $g_0,g_1,\ldots, g_t$ of the elements of $L\cap\calC(\a, B)$.
\item For every tuple of indices $(i_0,\ldots, i_N)$ such that $0\le i_0\le i_1\le\cdots\le i_N\le t$:
\begin{enumerate}
\item Let $X$ be the point $(g_{i_0},\ldots, g_{i_N})$.
\item If $\a$ is generated by $g_{i_0},\ldots, g_{i_N}$ and $H_{\infty}(X)\le B\cdot N(\a)$, then:

Include in $\mathcal L$ all the points in $\o(X)$.
\end{enumerate} 
\end{enumerate}
\item Return the list $\mathcal L$.
\end{enumerate}
\end{alg}

To conclude our discussion we mention one optional modification that in many cases leads to an improvement in the performance of both M1 and M2. For any point $P=[x_0,\ldots, x_N]\in\P^N(K)$ and any automorphism $\sigma\in\Aut(K/\Q)$, we denote by $P^{\sigma}$ the point
\[P^{\sigma}=[\sigma(x_0),\ldots, \sigma(x_N)]\in\P^N(K).\]
Note that $P$ and $P^{\sigma}$ have the same height and have Galois-conjugate ideal classes. Using this observation one sees that if
$\cc_1,\ldots, \cc_s\in\{\a_1,\ldots, \a_h\}$ are ideals representing the distinct orbits of the action of the group $G=\Aut(K/\Q)$ on the ideal class group $\cl(\o_K)$, then
\[\Omega(B)=\bigcup_{i=1}^sG\cdot\Omega(\cc_i,B).\] 
Thus, in order to compute $\Omega(B)$ it suffices to determine the ideals $\cc_i$, compute the sets $\Omega(\cc_i,B)$, and then let the group $G$ act on these sets. We are not aware of any method for computing the ideals $\cc_1,\ldots, \cc_s$ that does not involve first computing the full list $\a_1,\ldots, \a_h$; hence, this approach carries the additional cost of having to divide the class group into $G$-orbits. However, one finds in practice that if $s$ is substantially smaller than $h$, then the reduction in the number of ideals $\a$ for which the set $\Omega(\a,B)$ must be computed easily makes up for this additional cost. Whether this modification will yield improvements in performance is difficult to determine \textit{a priori}, since there appear to be no results in the literature that would allow a comparison of $s$ and $h$ in terms of standard invariants of $K$. Thus, we suggest that this approach only be taken when the class group computation is not costly, and $B$ is large. In such cases, the additional cost incurred by computing $G$-orbits is minimal, and the savings in time are substantial because every set of the form $\Omega(\a,B)$ would require a significant amount of time to be computed.

\section{The case of unit rank zero}\label{rank0_section}
We discuss here the problem of enumerating all points in the set $\Omega(B)$ when $K$ is a number field with finite unit group (i.e., $K=\Q$ or an imaginary quadratic field). The reasons for treating this case separately are twofold: first, the height function takes a particularly simple shape in this case, reducing the problem to a computation of elements of $\o_K$ with bounded norm; second, for the computation of ideal class groups of imaginary quadratic fields there are specialized algorithms that are faster than the methods that apply to arbitrary number fields \cite{biasse, hafner-mccurley, jacobson}. 

A method for listing all points in $\Omega(B)$ when $K=\Q$ is easily deduced from the following proposition. We will be using here the notation introduced in \S\ref{group_action_section}.

\begin{prop}\label{QQ_prop} Let $\calM$ denote the set of all tuples $(x_0,\ldots, x_N)\in\Z^{N+1}$ such that $0\le x_0\le\cdots\le x_N\le B$ and $\gcd(x_0,\ldots, x_N)=1$. Then 
\[\{P\in\P^N(\Q):H_{\Q}(P)\le B\}=\bigcup_{X\in\calM}\o(X).\]
\end{prop}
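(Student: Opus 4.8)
The plan is to verify the two inclusions separately, using the concrete description of the height over $\Q$ together with the fact that every point of $\P^N(\Q)$ has a unique normalized coordinate representation. First I would recall that since $\Q$ has class number one and no nontrivial units other than $\pm 1$, every point $P\in\P^N(\Q)$ can be written as $P=[x_0,\ldots,x_N]$ with $x_i\in\Z$ and $\gcd(x_0,\ldots,x_N)=1$; such coordinates are unique up to simultaneous sign change. For such coordinates the ideal generated by $x_0,\ldots,x_N$ is the unit ideal, so $N(\a)=1$ in \eqref{rel_height_eq}, and $H_{\Q}(P)=H_{\infty}(x_0,\ldots,x_N)=\max_i|x_i|$ by \eqref{H_infty_comp} (there is a single Archimedean place, the usual absolute value).

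For the inclusion $\supseteq$: given $X=(x_0,\ldots,x_N)\in\calM$, the point $[X]$ has coprime integer coordinates, so $H_{\Q}([X])=\max_i|x_i|=x_N\le B$; thus $[X]\in\Omega(B)$. Moreover, every point in $\o(X)$ is obtained from $[X]$ by permuting coordinates and multiplying individual coordinates by elements of $\mu_{\Q}=\{\pm1\}$, operations that preserve both the gcd of the coordinates and the quantity $\max_i|x_i|$; hence every point of $\o(X)$ also has height at most $B$ and lies in $\Omega(B)$.

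For the inclusion $\subseteq$: let $P\in\Omega(B)$. Choose the (essentially unique) coprime integer coordinates $P=[y_0,\ldots,y_N]$ with $\max_i|y_i|=H_{\Q}(P)\le B$. For each $i$ write $y_i=\epsilon_i|y_i|$ with $\epsilon_i\in\{\pm1\}$ (taking $\epsilon_i=1$ if $y_i=0$), and choose a permutation $\pi\in S_{N+1}$ sorting the $|y_i|$ into nondecreasing order. Setting $x_i=|y_{\pi^{-1}(i)}|$ (after suitably rewriting, following the bookkeeping in the proof of Proposition \ref{increasing_tuples_prop}), one gets $X=(x_0,\ldots,x_N)$ with $0\le x_0\le\cdots\le x_N\le B$, with $\gcd(x_0,\ldots,x_N)=\gcd(y_0,\ldots,y_N)=1$, and with $P=[u\cdot(\pi\cdot X)]$ for the appropriate sign vector $u\in\{\pm1\}^N$; hence $X\in\calM$ and $P\in\o(X)$.

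I expect the only mildly delicate point to be the index bookkeeping in the $\subseteq$ direction --- matching the signs $\epsilon_i$ and the sorting permutation $\pi$ so that $P$ is literally of the form $[u\cdot(\pi\cdot X)]$ with $u\in\mu_{\Q}^N$ acting on the first $N$ coordinates only --- but this is exactly the argument already carried out in the proof of Proposition \ref{increasing_tuples_prop} (with $\a=\o_{\Q}$, $\calC(\a,B)=\{-B,\ldots,B\}\cap\Z$, and $\mu_K=\{\pm1\}$), so it may be cleanest simply to invoke that proposition. Everything else is immediate from the formula $H_{\Q}([x_0,\ldots,x_N])=\max_i|x_i|$ for coprime integer tuples.
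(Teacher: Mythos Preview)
Your proposal is correct and follows essentially the same approach as the paper: the paper's proof is a one-line remark that this is ``a simple exercise'' using the characterization $H_{\Q}(P)\le B \iff P=[a_0,\ldots,a_N]$ with $a_i\in\Z\cap[-B,B]$ and $\gcd(a_0,\ldots,a_N)=1$, and you have simply written out that exercise in full, including the sign/permutation bookkeeping borrowed from the proof of Proposition~\ref{increasing_tuples_prop}. One minor caveat: Proposition~\ref{increasing_tuples_prop} is stated in \S\ref{bdd_class_section} under the standing assumption $r>0$, so it is cleaner to keep your self-contained argument rather than formally invoking that proposition.
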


\begin{proof} This is a simple exercise using the fact that, for any point $P\in\P^N(\Q)$, we have $H_{\Q}(P)\le B$ if and only if it is possible to write $P=[a_0,\ldots, a_N]$ with $a_i\in\Z\cap[-B,B]$ for all $i$ and $\gcd(a_0,\ldots, a_N)=1$.
\end{proof}

We assume henceforth that $K$ is an imaginary quadratic field. As has been noted earlier, in order to find all points of bounded height in $\P^N(K)$ it suffices to find all points of bounded height with given ideal class. For any nonzero ideal $\a$ of $\o_K$ and real number $B\ge 1$, define
\[\Omega(\a,B)=\{P\in\P^N(K): \cl(P)=\cl(\a)\text{ and } H_K(P)\le B\}.\]
The following proposition reduces the problem of computing $\Omega(\a,B)$ to the computation of the set
\[\calC(\a,B)=\{\gamma\in\a:N_{K/\Q}(\gamma)\le B\cdot N(\a)\}.\]

\begin{prop}\label{increasing_tuples_prop_iq} Let $g_0,g_1,\ldots, g_t\in\calC(\a,B)$ be elements representing all the orbits of the action of $\mu_K$ on $\calC(\a,B)$. Let $\mathcal M$ be the set of all tuples $X=(g_{i_0},\ldots, g_{i_N})$ such that $0\le i_0\le i_1\le\cdots\le i_N\le t$ and $\a$ is the ideal generated by $g_{i_0},\ldots, g_{i_N}$. Then
\[\Omega(\a,B)=\bigcup_{X\in\calM}\o(X).\]
\end{prop}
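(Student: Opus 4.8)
The plan is to mirror the proof of Proposition \ref{increasing_tuples_prop}, simplified by the absence of the unit-rank hypotheses: in the rank-zero case the height of a point with ideal class $\cl(\a)$ is automatically controlled once we use coordinates generating $\a$, so the explicit $H_\infty$-bound that appeared in the definition of $\calM$ there is no longer needed. Concretely, the key observation is that if $P=[y_0,\dots,y_N]\in\P^N(K)$ with $\a$ the ideal generated by $y_0,\dots,y_N$, then by \eqref{rel_height_eq} we have $H_K(P)=H_\infty(y_0,\dots,y_N)/N(\a)$, and since $\o_K$ has finite unit group one has $H_\infty(y_0,\dots,y_N)=\max_\sigma\max_i|\sigma(y_i)|_{\C}\cdots$ — more to the point, for $K$ imaginary quadratic there is a single pair of complex embeddings, so $H_\infty(y_0,\dots,y_N)=\max_i|y_i|_{\C}^2=\max_i N_{K/\Q}(y_i)$, hence $H_K(P)\le B$ is equivalent to $N_{K/\Q}(y_i)\le B\cdot N(\a)$ for all $i$, i.e.\ to $y_i\in\calC(\a,B)$ for all $i$. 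This is precisely the statement that the norm condition on coordinates generating $\a$ captures the height bound exactly, which is what eliminates the extra hypothesis.

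First I would prove the inclusion $\bigcup_{X\in\calM}\o(X)\subseteq\Omega(\a,B)$. Fix $X=(g_{i_0},\dots,g_{i_N})\in\calM$; by definition $\a$ is the ideal generated by its entries, and each $g_{i_j}\in\calC(\a,B)$ satisfies $N_{K/\Q}(g_{i_j})\le B\cdot N(\a)$, so by the equivalence just noted $P=[X]$ has $\cl(P)=\cl(\a)$ and $H_K(P)\le B$, i.e.\ $P\in\Omega(\a,B)$. Then, exactly as in the proof of Proposition \ref{increasing_tuples_prop}, a direct check shows every point of $\o(X)=\{[u\cdot(\pi\cdot X)]\}$ differs from $P$ only by a permutation of coordinates and multiplication of coordinates by roots of unity, hence has the same ideal class and the same height; so $\o(X)\subseteq\Omega(\a,B)$.

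For the reverse inclusion, let $P\in\Omega(\a,B)$. Since $\cl(P)=\cl(\a)$, choose homogeneous coordinates $P=[y_0,\dots,y_N]$ with $\a$ the ideal generated by $y_0,\dots,y_N$; then $H_K(P)\le B$ forces $N_{K/\Q}(y_i)\le B\cdot N(\a)$ for every $i$, so $y_i\in\a$ and $y_i\in\calC(\a,B)$. Write $y_i=z_i g_{e_i}$ with $z_i\in\mu_K$ and $e_i\in\{0,\dots,t\}$, set $\zeta_i=z_iz_N^{-1}$ for $i<N$, pick a permutation $\pi$ so that $e_{\pi(0)}\le\cdots\le e_{\pi(N)}$, and put $x_i=g_{e_{\pi(i)}}$, $X=(x_0,\dots,x_N)$, $u=(\zeta_0,\dots,\zeta_{N-1})\in\mu_K^N$. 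Then $P=[u\cdot(\pi\cdot X)]\in\o(X)$, the entries of $X$ have nondecreasing indices, and since the $x_i$ are associates (in some order) of the $y_i$, the ideal generated by $x_0,\dots,x_N$ equals $\a$; hence $X\in\calM$. This completes the proof.

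The only point requiring a little care — and the closest thing to an obstacle — is the reduction of the height condition to the norm condition on coordinates generating $\a$; this uses that $K$ is imaginary quadratic (so there is exactly one infinite place, of local degree $2$, and $\|y\|_v=|N_{K/\Q}(y)|$ for that place) together with \eqref{rel_height_eq}. Everything else is a verbatim adaptation of the rank-positive argument in Proposition \ref{increasing_tuples_prop}, with the $H_\infty(X)\le B\cdot N(\a)$ clause simply absorbed into membership in $\calC(\a,B)$.
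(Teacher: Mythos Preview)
Your proof is correct and follows essentially the same approach as the paper: both identify the key simplification that for $K$ imaginary quadratic the single complex place gives $H_{\infty}(y_0,\ldots,y_N)=\max_i N_{K/\Q}(y_i)$, so that the height bound $H_K(P)\le B$ with coordinates generating $\a$ is equivalent to $y_i\in\calC(\a,B)$ for all $i$; the rest is then the same orbit-and-permutation argument as in Proposition~\ref{increasing_tuples_prop}. The paper is in fact terser than you are, simply stating the formula \eqref{H_infty_iq} and declaring the remainder ``entirely analogous'' to the earlier proof.
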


\begin{proof}
Let $\sigma,\overline\sigma$ be the embeddings $K\into\C$. For any element $\alpha\in K$ we have $N_{K/\Q}(\alpha)=\sigma(\alpha)\overline\sigma(\alpha)=|\sigma(\alpha)|^2$. Hence, for any tuple $(x_0,\ldots, x_N)\in K^{N+1}$ we obtain
\begin{equation}\label{H_infty_iq}
H_{\infty}(x_0,\ldots, x_N)= \max\{N_{K/\Q}(x_0),\ldots, N_{K/\Q}(x_N)\}.
\end{equation}

Suppose that $P\in\Omega(\a,B)$. Since $\cl(P)=\cl(\a)$, there are homogeneous coordinates $[x_0,\ldots, x_N]$ for $P$ such that $\a$ is generated by $x_0,\ldots, x_N$. The condition $H_K(P)\le B$ is then equivalent to $H_{\infty}(x_0,\ldots, x_N)\le B\cdot N(\a)$, so by \eqref{H_infty_iq} we see that $x_i\in\calC(\a,B)$ for every $i$. The remainder of the proof is entirely analogous to the proof of Proposition \ref{increasing_tuples_prop}.
\end{proof}

Let $\a_1,\ldots, \a_h$ be ideals representing the ideal classes of $\o_K$. If we can determine representatives for each orbit space $\calC(\a_i,B)/\mu_K$, then Proposition \ref{increasing_tuples_prop_iq} can be used compute $\Omega(\a_i,B)$ for each $i$, and thus $\Omega(B)$ is obtained. Though it is possible to compute each set $\calC(\a_i,B)$ separately, it would be more efficient to apply some of the ideas introduced in \S\ref{searchspace_section}; in particular, we should compute one set $\calC(B)$ that contains all of the sets $\calC(\a_i,B)$. Define 
\[\calN=\{0\}\cup\bigcup_{i=1}^h\{k\cdot N(\a_i):1\le k\le B\},\]
and let
\[\calC(B)=\{\gamma\in\o_K: N_{K/\Q}(\gamma)\in\calN\},\]
so that $\calC(B)$ contains every set $\calC(\a_i,B)$. Note that the group $\mu_K$ acts on $\calC(B)$. A set $L$ of representatives of the orbits of this action can be found by solving norm equations in $K$, for instance using the algorithm given in \cite{fincke-pohst1}. Having done this, representatives of $\calC(\a_i, B)/\mu_K$ can be obtained for every index $i$ by intersecting $L$ with $\calC(\a_i, B)$. We are thus led to the following algorithm.

\begin{alg}\label{omega_B_alg_iq}Computing $\Omega(B)$ when $K$ is imaginary quadratic.\mbox{}
\begin{enumerate}
\item Create an empty list $\mathcal L$. This list will store the points belonging to $\Omega(B)$.
\item Determine ideals $\a_1,\ldots, \a_h$ representing the ideal classes of $\o_K$.
\item Construct the set $\calN$.
\item Create an empty set $L$. This set will store representatives for $\calC(B)/\mu_K$.
\item For every number $m\in\calN$:

Include in $L$ all elements of $\o_K$ with norm $m$, modulo units.
\item For each ideal $\a\in\{\a_1,\ldots, \a_h\}$:
\begin{enumerate}
\item Fix an ordering $g_0,g_1,\ldots, g_t$ of the elements of the set $L\cap\calC(\a, B)$.
\item For every tuple of indices $(i_0,\ldots, i_N)$ such that $0\le i_0\le i_1\le\cdots\le i_N\le t$:

If $\a$ is generated by $g_{i_0},\ldots, g_{i_N}$, then include in $\mathcal L$ all the points in $\o(X)$.
\end{enumerate}
\item Return the list $\mathcal L$.
\end{enumerate}
\end{alg}

As in the case of number fields with positive unit rank, Algorithm \ref{omega_B_alg_iq} can be improved by using the action of the group $\Gal(K/\Q)$ on the ideal class group of $\o_K$. See the final paragraphs of \S\ref{searchspace_section} for details.

\section{Efficiency of the algorithm}\label{efficiency_section}

In this section we carry out an analysis of the efficiency of our method for computing points of bounded height in $\P^N(K)$. As a measure of the efficiency of the algorithm we consider how many points it generates in the process of searching for points in $\Omega(B)$, and how this quantity compares to the size of $\Omega(B)$. The case $K=\Q$ being trivial in view of Proposition \ref{QQ_prop}, we assume henceforth that $K$ is different from $\Q$. 

Since the problem of computing $\Omega(B)$ is reduced to computing sets $\Omega(\a,B)$ for a finite list of ideals $\a$, we focus first on determining how efficient our method for computing such sets is. For this analysis we use Algorithm \ref{omega_aB_alg}, which is the simplest description of our method. We define the {\it search space} of Algorithm \ref{omega_aB_alg}  to be the set 
\begin{equation}\label{S_aB_def}
\calS(\a,B)=\{[x_0,\ldots, x_N]\in\P^N(K): x_i\in \calC(\a,B) \text{ for all $i$}\}.
\end{equation}
The elements of $\calS(\a,B)$ are all the points that would be considered in the algorithm while searching for points in $\Omega(\a,B)$. Since the size of the search space is largely determined by the size of $\calC(\a,B)$, we begin by understanding the latter. We will bound the size of $\calC(\a,B)$ by the size of the set
\[\widetilde\calC(\a,B)=\{x\in\a: \Phi(x)\in \calP(\a,B)\} \supseteq \calC(\a,B).\]

\begin{prop}\label{ideal_lattice_count} There is a constant $c$, depending only on $K$ and the choice of fundamental units, such that 
\[\#\widetilde\calC(\a,B)=cB+O(B^{1-1/n}).\]
\end{prop}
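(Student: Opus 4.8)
The plan is to recognize $\#\widetilde\calC(\a,B)$ as a lattice-point count for the lattice $\Phi(\a) \subset \R^n$ inside the region $\calP(\a,B)$, and then apply a standard "volume plus boundary error" estimate. Recall that $\widetilde\calC(\a,B) = \{x \in \a : \Phi(x) \in \calP(\a,B)\}$, so since $\Phi$ is injective on $\a$,
\[
\#\widetilde\calC(\a,B) = \#\bigl(\Phi(\a) \cap \calP(\a,B)\bigr).
\]
The region $\calP(\a,B)$ is a product of $r_1$ intervals of half-length $(B N(\a))^{1/n}\exp(D_{\sigma_i})$ and $r_2$ disks of radius $(B N(\a))^{1/n}\exp(D_{\tau_j}/2)$; in particular it is convex, bounded, and a \emph{dilate} of the fixed region $\calP(\a,1)$ by the factor $\rho := (B N(\a))^{1/n}$. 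The first step is therefore to make this scaling explicit: $\calP(\a,B) = \rho \cdot \calR_\a$, where $\calR_\a := \calP(\a,1)$ depends only on $K$ and the chosen fundamental units (through the $D_v$) and on $N(\a)$ — actually, one can absorb $N(\a)$ as well and write $\calP(\a,B)$ as a dilate of a region depending only on $K$ and the units, since $N(\a)$ enters only through $\rho$.

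The second step is the lattice-point count. Since $\Phi(\a)$ has covolume $\vol F(\a) = 2^{-r_2}|\Delta_K|^{1/2} N(\a)$ by \eqref{ideal_volume}, the expected main term is $\vol \calP(\a,B) / \vol F(\a)$. Now
\[
\vol \calP(\a,B) = \prod_{i=1}^{r_1} 2\rho\exp(D_{\sigma_i}) \cdot \prod_{j=1}^{r_2} \pi \rho^2 \exp(D_{\tau_j}) = 2^{r_1}\pi^{r_2}\exp\Bigl(\sum_{v\mid\infty} D_v\Bigr)\,\rho^n,
\]
using $\rho^{r_1} \cdot \rho^{2r_2} = \rho^n$ and $\sum_{i} D_{\sigma_i} + \sum_j D_{\tau_j} = \sum_{v\mid\infty} D_v$. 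Since $\rho^n = B N(\a)$, the ratio $\vol\calP(\a,B)/\vol F(\a)$ equals $c B$ where
\[
c = \frac{2^{r_1}\pi^{r_2}\exp\bigl(\sum_{v\mid\infty} D_v\bigr)}{2^{-r_2}|\Delta_K|^{1/2}},
\]
a constant depending only on $K$ and the choice of fundamental units — exactly as claimed. This gives the main term; it remains to control the error.

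The third step, which is the main technical point, is the boundary error estimate: for a lattice $L$ of covolume $d$ in $\R^n$ and a "nice" convex body $\calK$, one has $\#(L \cap \calK) = \vol(\calK)/d + O(\partial\text{-area})$, where the implied constant depends on $L$. The cleanest route is the classical Lipschitz-type bound: if $\calK$ is a dilate by $\rho \geq 1$ of a fixed body whose boundary is the image of a bounded set under finitely many Lipschitz maps from $[0,1]^{n-1}$, then the number of lattice points differs from the volume by $O(\rho^{n-1})$, with constant depending on $L$ and the fixed body. Here the boundary of $\calP(\a,B)$ consists of pieces of hyperplanes (from the interval factors) and pieces of spheres (from the disk factors), all clearly Lipschitz-parametrizable, and the dilation factor is $\rho = (BN(\a))^{1/n}$, so the error is $O(\rho^{n-1}) = O\bigl((BN(\a))^{(n-1)/n}\bigr) = O(B^{1-1/n})$, with the implied constant absorbing $N(\a)$ and the dependence on $\Phi(\a)$ into the $O$. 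Since we want a statement uniform in $\a$ only up to the $O$-constant (the proposition fixes $\a$), this suffices. One subtlety to flag: the constant in the Lipschitz bound depends on $L = \Phi(\a)$, in particular on the shortest vector / successive minima of $\Phi(\a)$; since $\a$ is fixed this is harmless, but it is worth a sentence. I expect this boundary estimate — specifically, citing or quoting the right form of the Lipschitz principle (e.g., the version in \cite{lang_dg} or Davenport's lemma) and checking that $\calP(\a,B)$ and its dilation structure meet its hypotheses — to be the part requiring the most care; the volume computation and the identification of $c$ are routine.
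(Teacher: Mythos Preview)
Your approach is essentially identical to the paper's: both recognize the count as $\#(\Phi(\a)\cap\calP(\a,B))$, exhibit $\calP(\a,B)$ as a dilate of a fixed convex body with Lipschitz-parametrizable boundary, invoke the standard lattice-point estimate (the paper cites Lang's version, Lemma~\ref{lang_lattice_lem}, together with \cite{widmer_bdd_height,widmer_lattice_pts} for the parametrizability), and compute the volumes to obtain the same constant $c = 2^{r_1}(2\pi)^{r_2}\exp(\sum_v D_v)/|\Delta_K|^{1/2}$. One small slip to fix: $\calP(\a,B) = B^{1/n}\cdot\calP(\a,1)$, not $(BN(\a))^{1/n}\cdot\calP(\a,1)$ as you first write, since $\calP(\a,1)$ already carries the factor $N(\a)^{1/n}$ --- but your immediate self-correction (dilating an $\a$-independent region by $(BN(\a))^{1/n}$) is correct and yields the same result.
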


For the proof of the proposition we will need a classical result concerning lattice points in homogeneously expanding domains; we refer the reader to Lang's book \cite[Chap. VI, \S2]{lang_nt} for details. We say that a subset $T$ of $\R^n$ is $k$-\textit{Lipschitz parametrizable} if there exists a finite number of Lipschitz maps $[0,1]^k\to T$ whose images cover $T$. Recall that a map $f:[0,1]^k\to \R^n$ satisfies the Lipschitz condition if there exists a constant $\alpha$ such that
\[\|f(x)-f(y)\|\le \alpha\cdot\|x-y\|\]
for all $x,y\in [0,1]^k$. 

\begin{lem}\label{lang_lattice_lem} Let $D$ be a bounded subset of $\R^n$ and $L$ a lattice in $\R^n$ with fundamental domain $F$. Assume that the boundary of $D$ is $(n-1)$-Lipschitz parametrizable. Then the number of lattice points in $tD$, for $t\in\R_{>0}$, satisfies
\[\#(L\cap tD)=\frac{\vol D}{\vol F}\cdot t^n + O(t^{n-1}).\]
\end{lem}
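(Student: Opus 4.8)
The plan is to run the classical tile-counting argument, with the boundary error controlled by the Lipschitz hypothesis. First I would replace $F$ by the half-open fundamental parallelotope $F_0=\{c_1b_1+\cdots+c_nb_n:c_i\in[0,1)\}$ attached to a $\Z$-basis $b_1,\dots,b_n$ of $L$; this changes neither $\vol F$ nor the statement, and has the advantage that $F_0$ is bounded with $0\in F_0$, so $\rho:=\operatorname{diam}F_0<\infty$, and the translates $\{\lambda+F_0:\lambda\in L\}$ partition $\R^n$ with each point lying in exactly one of them. Writing $N(t)=\#(L\cap tD)$ (finite, since $tD$ is bounded) and $U=U(t)=\bigcup_{\lambda\in L\cap tD}(\lambda+F_0)$, one has $\vol U=N(t)\cdot\vol F_0$ exactly. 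Hence
\[ \bigl|\, N(t)\,\vol F-t^{n}\vol D \,\bigr| = \bigl|\, \vol U - \vol(tD) \,\bigr| \le \vol\bigl(U\triangle tD\bigr), \]
so the whole problem reduces to showing $\vol(U\triangle tD)=O(t^{n-1})$.

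The next step is to confine $U\triangle tD$ to a thin tube around $\partial(tD)=t\cdot\partial D$. If $x\in tD\setminus U$, let $\lambda\in L$ be the unique lattice point with $x\in\lambda+F_0$; then $\lambda\notin tD$ (otherwise $\lambda+F_0\subseteq U$), while $\|x-\lambda\|\le\rho$ because $x$ and $\lambda$ both lie in $\lambda+F_0$. Since $x\in tD$ and $\lambda\notin tD$, the segment $[\lambda,x]$ crosses $\partial(tD)$, so $\operatorname{dist}(x,\partial(tD))\le\rho$. The case $x\in U\setminus tD$ is identical, with $x$ and a suitable $\lambda\in L\cap tD$ interchanged. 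Thus $U\triangle tD\subseteq\{x\in\R^n:\operatorname{dist}(x,\partial(tD))\le\rho\}$, and I am reduced to bounding the volume of this closed $\rho$-neighborhood of $\partial(tD)$.

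This volume estimate is the technical heart of the proof. By hypothesis $\partial D$ is covered by the images of finitely many Lipschitz maps $f_1,\dots,f_m:[0,1]^{n-1}\to\R^n$, say with Lipschitz constants $\alpha_1,\dots,\alpha_m$; rescaling, $\partial(tD)$ is covered by $tf_1,\dots,tf_m$, which have Lipschitz constants $t\alpha_1,\dots,t\alpha_m$. Fixing $i$ and assuming $t\ge1$, I would partition $[0,1]^{n-1}$ into $q_i^{n-1}$ congruent subcubes of side $1/q_i$, where $q_i=\lceil t\alpha_i\rceil\le(\alpha_i+1)t$; the image under $tf_i$ of each subcube has diameter at most $t\alpha_i\sqrt{n-1}/q_i\le\sqrt{n-1}$, hence lies in a ball of radius $\sqrt{n-1}$, whose $\rho$-neighborhood is a ball of radius $\sqrt{n-1}+\rho$ and volume $c_n(\sqrt{n-1}+\rho)^{n}$ for a dimensional constant $c_n$, independent of $t$. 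Summing over the $q_i^{n-1}\le(\alpha_i+1)^{n-1}t^{n-1}$ subcubes and then over $i$, the $\rho$-neighborhood of $\partial(tD)$ has volume at most $c_n(\sqrt{n-1}+\rho)^{n}\,t^{n-1}\sum_{i=1}^{m}(\alpha_i+1)^{n-1}=O(t^{n-1})$. Combining this with the displayed inequality gives $N(t)\,\vol F=t^{n}\vol D+O(t^{n-1})$, which is the assertion; the implied constant depends only on $n$, on $L$ (through $\rho$ and $\vol F$), and on the Lipschitz data of $\partial D$.

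The only delicate point is the tube-volume estimate just sketched; the rest is bookkeeping with the tiling. The lemma is classical --- see \cite[Chap.\ VI, \S2]{lang_nt} --- and since the bound $O(t^{n-1})$ is meaningful only as $t\to\infty$, which is the regime in which it will be applied, I would not dwell on the trivial behavior for bounded $t$.
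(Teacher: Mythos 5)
The paper does not actually prove this lemma: it is quoted as a classical fact, with the reader sent to Lang \cite[Chap.\ VI, \S2]{lang_nt} for the argument and to Widmer \cite[Thm.\ 5.4]{widmer_bdd_height} for the two points Lang glosses over. Your sketch is precisely that classical proof, and it is correct: the tiling by half-open translates of $F_0$ gives $N(t)\vol F_0=\vol U$ exactly; the segment argument (using $0\in F_0$, so $\|x-\lambda\|\le\rho$) traps $U\triangle tD$ in the closed $\rho$-tube around $t\,\partial D$; and subdividing each Lipschitz chart into $\lceil t\alpha_i\rceil^{n-1}$ subcubes covers that tube by $O(t^{n-1})$ balls of radius $\sqrt{n-1}+\rho$. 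Two small points are worth making explicit. First, for $|\vol U-\vol(tD)|$ to be meaningful you need $D$ to be measurable --- exactly the gap in Lang that the remark following Lemma~\ref{lang_lattice_lem} flags --- but your own covering supplies it: taking $q$ subcubes of side $1/q$ covers $\partial D$ by $q^{n-1}$ sets of diameter $O(1/q)$ and total volume $O(1/q)$, so $\partial D$ is Lebesgue-null and $D$ is Jordan (hence Lebesgue) measurable; it costs one sentence to say so. Second, you are right that the estimate is an asymptotic as $t\to\infty$ (if $0\in\operatorname{int}D$ the displayed formula fails literally as $t\to0^{+}$), and that is the only regime needed for Proposition~\ref{ideal_lattice_count}. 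The dependence of the implied constant on $n$, the lattice, and the number of charts and their Lipschitz constants matches what the paper records in its remark. (A cosmetic quibble: if some $\alpha_i=0$ take $q_i=\max(1,\lceil t\alpha_i\rceil)$ so the subdivision is nonempty.)
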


\begin{rem} In discussing the above lemma, Lang neglects to show that the condition of $\partial D$ being $(n-1)$-Lipschitz parametrizable implies that $D$ is measurable, so that the volume of $D$ is well defined. Furthermore, Lang does not mention that the $O$ constant depends on the number of maps parametrizing $\partial D$. For a proof of both of these statements, see \cite[Thm. 5.4]{widmer_bdd_height}.
\end{rem}

\begin{proof}[Proof of Proposition \ref{ideal_lattice_count}]

To prove the proposition we apply Lemma \ref{lang_lattice_lem} to the lattice $L=\Phi(\a)$ and the set $D=\calP(\a,1)$. Note that $D$ is bounded and convex, since it is defined as a Cartesian product of closed intervals in $\R$ and closed disks in $\R^2$. It follows from \cite[Thm. 2.6]{widmer_lattice_pts} that the boundary of $D$ is $(n-1)$-Lipschitz parametrizable. Hence, the conditions of Lemma \ref{lang_lattice_lem} are satisfied.

One can see from the definitions that $\calP(\a,B)=B^{1/n}\calP(\a,1)$; thus, applying the lemma we obtain
\begin{equation}\label{C_a_B_size}
\begin{aligned}
\#\widetilde\calC(\a,B) &= \#\left(\Phi(\a)\cap\calP(\a,B)\right)=\#\left(\Phi(\a)\cap B^{1/n}\calP(\a,1)\right) \\
&= \frac{\vol\calP(\a,1)}{\vol F(\a)}\cdot B + O(B^{1-1/n}),
\end{aligned}
\end{equation}
where $F(\a)$ is the fundamental parallelotope defined in \eqref{ideal_fund_par}.
The region $\calP(\a,1)\subset\R^n$ is a Cartesian product of intervals of length $2\cdot N(\a)^{1/n}\exp(D_v)$, where $v$ ranges over the real places of $K$, and disks of radius $N(\a)^{1/n}\exp(D_v/2)$, where $v$ ranges over the complex places. Therefore, 
\begin{align*}
\vol \calP(\a,1)&=\left(\prod_{v \text{ real}}2\cdot N(\a)^{1/n}\exp(D_v)\right)\cdot\left(\prod_{v \text{ complex}}\pi\cdot N(\a)^{2/n}\exp(D_v)\right) \\
&= 2^{r_1}\pi^{r_2}N(\a)\exp(\textstyle\sum_vD_v).
\end{align*}

By \eqref{ideal_volume} we have $\vol F(\a)=2^{-r_2}|\Delta_K|^{1/2}N(\a)$, so the coefficient of $B$ in \eqref{C_a_B_size} is  the constant $c$ given by
\begin{equation}\label{leading_coeff_latt}
c=\frac{2^{r_1}(2\pi)^{r_2}\exp(\textstyle\sum_vD_v)}{|\Delta_K|^{1/2}}.
\end{equation}
This proves the proposition, since $c$ depends only on standard invariants of $K$ and on the numbers $D_v$, which are determined by the choice of fundamental units.
\end{proof}

Having proved asymptotic bounds on the size of $\widetilde\calC(\a,B)$, we can now bound the size of the search space $\calS(\a,B)$.
\begin{prop}\label{class_search_space_size}
Let $m=N+1$ and define a constant $C$ by
\begin{equation}\label{C_const} C=\frac{w\cdot\zeta_K(m)\exp(m\sum_vD_v)}{R\cdot m^r},
\end{equation}
where $w$ is the number of roots of unity in $K$; $R$ is the regulator; $r$ is the rank of the unit group; and $\zeta_K$ is the zeta function of $K$. Then
\[\limsup_{B\to\infty}\frac{\#\calS(\a,B)}{\#\Omega(\a,B)}\le C.\]
\end{prop}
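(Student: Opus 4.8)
The plan is to bound the numerator $\#\calS(\a,B)$ using Proposition \ref{ideal_lattice_count}, bound the denominator $\#\Omega(\a,B)$ from below using Schanuel's theorem, and then compare the two leading coefficients. For the numerator: by definition \eqref{S_aB_def}, every point of $\calS(\a,B)$ is represented by an $(N+1)$-tuple of elements of $\calC(\a,B)$, but each point in $\P^N(K)$ has many such representatives. The subgroup $\mu_K \subset \o_K^\ast$ acts coordinatewise on $\calC(\a,B)^{N+1}$, and two tuples giving the same point differ (at least) by the diagonal action of $\mu_K$; so the number of tuples mapping to a fixed point is at least $w$. Hence $\#\calS(\a,B) \le \frac{1}{w}\,(\#\calC(\a,B))^{N+1} \le \frac{1}{w}\,(\#\widetilde\calC(\a,B))^{N+1}$. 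Applying Proposition \ref{ideal_lattice_count} with $m = N+1$ gives
\[
\#\calS(\a,B) \le \frac{1}{w}\,\bigl(cB + O(B^{1-1/n})\bigr)^{m} = \frac{c^m}{w}\,B^{m} + O(B^{m-1/n}),
\]
where $c$ is the constant \eqref{leading_coeff_latt}. I would raise the binomial and keep only the leading term, noting the error is $O(B^{m - 1/n})$.

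For the denominator I would invoke Schanuel's theorem, but in its refined form giving the count of points of bounded height with \emph{fixed ideal class}: $\#\Omega(\a,B) \sim c_{\a}\, B^{m}$ as $B\to\infty$, where $c_{\a}$ is the Schanuel constant for the class of $\a$. Since the total Schanuel constant is a sum of $h$ equal contributions (the count is the same for each ideal class — this is part of Schanuel's computation, or follows from the fact that multiplication by an ideal is a height-preserving bijection between the relevant point sets up to the class-group action), one has $c_{\a} = c_{\mathrm{Sch}}/h$, where $c_{\mathrm{Sch}}$ is the full Schanuel constant
\[
c_{\mathrm{Sch}} = \frac{h R}{w\,\zeta_K(m)}\left(\frac{2^{r_1}(2\pi)^{r_2}}{\sqrt{|\Delta_K|}}\right)^{m} \frac{m^{r_1+r_2-1}}{|\Delta_K|^{1/2}} \cdot (\text{appropriate bookkeeping}),
\]
so that the $h$'s cancel and $c_{\a}$ is a clean expression in $R$, $w$, $\zeta_K(m)$, $\Delta_K$, $r_1$, $r_2$.

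Then the $\limsup$ of the ratio is at most $\dfrac{c^m/w}{c_{\a}}$, and the remaining work is purely algebraic: substitute $c = 2^{r_1}(2\pi)^{r_2}\exp(\sum_v D_v)/|\Delta_K|^{1/2}$, substitute the expression for $c_{\a}$, and check that the powers of $2$, $\pi$, and $|\Delta_K|$ all cancel, leaving exactly
\[
\frac{c^m/w}{c_{\a}} = \frac{w\,\zeta_K(m)\exp\!\bigl(m\sum_v D_v\bigr)}{R\, m^r} = C.
\]

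The main obstacle is getting the Schanuel constant $c_{\a}$ exactly right — in particular pinning down its dependence on the discriminant, the powers of $2$ and $\pi$, and the factor $m^{r}$, and verifying the clean cancellation against $c^m$. This is a matter of carefully transcribing Schanuel's formula (or the version in \cite[Chap. 3, \S5]{lang_dg}) for the fixed-ideal-class count and matching it term by term against \eqref{leading_coeff_latt}; the fact that $\sum_v D_v$ and $R$ are the only ``unit data'' surviving is the sanity check that the bookkeeping is correct, since the inefficiency should be governed precisely by how much larger the box $\calP(\a,1)$ is than a true fundamental domain for the unit action. One subtlety worth flagging: the lower bound $\#\calS(\a,B)\le \frac1w(\#\widetilde\calC)^m$ is crude in that it ignores tuples that fail to generate $\a$ or have too-large $H_\infty$, so the true ratio is smaller; but since we only need an upper bound on the $\limsup$, this loss is harmless.
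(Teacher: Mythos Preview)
Your approach matches the paper's: bound $\#\calS(\a,B)$ by $(\#\widetilde\calC(\a,B))^m$ via Proposition~\ref{ideal_lattice_count}, invoke Schanuel's fixed-ideal-class count for $\#\Omega(\a,B)$, and divide. Two points to flag.

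First, the paper does \emph{not} introduce your $1/w$ refinement from the diagonal $\mu_K$-action; it uses the cruder bound $\#\calS(\a,B)\le (\#\widetilde\calC(\a,B))^m$ and obtains exactly $C$ after dividing by Schanuel's constant. Your refinement is correct and genuinely sharper, but if you keep it the final ratio is $c^m/(w\, c_{\a}) = C/w$, not $C$. So your closing equation $\frac{c^m/w}{c_{\a}}=C$ is off by a factor of $w$; either drop the $1/w$ (as the paper does) to match the stated bound exactly, or keep it and note that you have in fact proved the stronger inequality $\limsup \le C/w \le C$.

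Second, your expression for $c_{\mathrm{Sch}}$ carries a stray $|\Delta_K|^{-1/2}$ and the ``appropriate bookkeeping'' hedge. The paper avoids this by citing Schanuel directly for the per-class asymptotic
\[
\#\Omega(\a,B)= B^m\cdot m^r\cdot\frac{R/w}{\zeta_K(m)}\left(\frac{2^{r_1}(2\pi)^{r_2}}{|\Delta_K|^{1/2}}\right)^m+O(B^{m-1/n}),
\]
with no further adjustment: this is already the fixed-class count, so there is no division by $h$ and no extra discriminant factor. Against $c^m$ from \eqref{leading_coeff_latt}, the factor $\bigl(2^{r_1}(2\pi)^{r_2}/|\Delta_K|^{1/2}\bigr)^m$ cancels cleanly, leaving exactly $C$.
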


\begin{proof}
We have $\#\calS(\a,B)\le (\#\calC(\a,B))^m\le (\#\widetilde\calC(\a,B))^m$, so Proposition \ref{ideal_lattice_count} implies that
\begin{equation}\label{ideal_search_space_bound}
\#\calS(\a,B)\le c^mB^m+O(B^{m-1/n}).
\end{equation}

Now, Schanuel \cite{schanuel} showed that
\begin{equation}\label{schanuel_class_size}
\#\Omega(\a,B)= B^m\cdot m^r\cdot\frac{R/w}{\zeta_K(m)}\left(\frac{2^{r_1}(2\pi)^{r_2}}{|\Delta_K|^{1/2}}\right)^m+O(B^{m-1/n}).
\end{equation}

(The correct error term is different in the special case where $K=\Q$ and $N=1$, but we are assuming in this section that $K$ is not $\Q$.) The result follows by dividing \eqref{ideal_search_space_bound} and \eqref{schanuel_class_size} and letting $B\to\infty$.
\end{proof}

We can now prove our main result concerning the size of the search space of our method for computing $\Omega(B)$. If $\a_1,\ldots, \a_h$ are ideals representing the distinct ideal classes of $\o_K$, then
\[\Omega(B) = \bigcup_{i=1}^h\Omega(\a_i,B),\]
so $\Omega(B)$ can be obtained by computing $\Omega(\a_i,B)$ for every index $i$. With sets $\calS(\a_i,B)$ defined as in \eqref{S_aB_def}, the search space of this method is the set
\[\calS(B)=\bigcup_{i=1}^h\calS(\a_i,B),\]
whose size we now compare to that of $\Omega(B)$.

\begin{thm}\label{search_space_size} Let $h$ be the class number of $\o_K$ and let $C$ be the constant defined in \eqref{C_const}. Then
\[\limsup_{B\to\infty}\frac{\#\calS(B)}{\#\Omega(B)}\le hC.\]
\end{thm}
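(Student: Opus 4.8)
The plan is to reduce Theorem~\ref{search_space_size} to the class-by-class estimate already established in Proposition~\ref{class_search_space_size}, using nothing more than the subadditivity of cardinality under unions and the fact that $h$ is a fixed finite constant. First I would write, from the definition of $\calS(B)$ as the union $\bigcup_{i=1}^h\calS(\a_i,B)$, the crude bound
\[
\#\calS(B)\le\sum_{i=1}^h\#\calS(\a_i,B).
\]
On the other side, since the union $\Omega(B)=\bigcup_{i=1}^h\Omega(\a_i,B)$ is \emph{disjoint}, we have the exact identity
\[
\#\Omega(B)=\sum_{i=1}^h\#\Omega(\a_i,B)\ge\#\Omega(\a_j,B)
\]
for every index $j$. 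In particular $\#\Omega(B)\ge\#\Omega(\a_j,B)$ for each $j$, which is the key inequality that lets me pass from the global ratio to the individual ones.

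Next I would combine these two observations: for every $B$,
\[
\frac{\#\calS(B)}{\#\Omega(B)}\le\frac{\sum_{i=1}^h\#\calS(\a_i,B)}{\#\Omega(B)}\le\sum_{i=1}^h\frac{\#\calS(\a_i,B)}{\#\Omega(\a_i,B)},
\]
where in the last step I bounded $\#\Omega(B)$ below by $\#\Omega(\a_i,B)$ in the $i$-th summand. Taking $\limsup_{B\to\infty}$ and using that the $\limsup$ of a finite sum is at most the sum of the $\limsup$s, I get
\[
\limsup_{B\to\infty}\frac{\#\calS(B)}{\#\Omega(B)}\le\sum_{i=1}^h\limsup_{B\to\infty}\frac{\#\calS(\a_i,B)}{\#\Omega(\a_i,B)}\le\sum_{i=1}^h C=hC,
\]
the middle inequality being exactly Proposition~\ref{class_search_space_size} applied to each $\a_i$ (note that the constant $C$ in \eqref{C_const} depends only on $K$, $N$, and the choice of fundamental units, not on the particular ideal $\a_i$, so the same $C$ works for all $i$). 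This completes the proof.

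There is essentially no obstacle here: the only points requiring a word of care are that the union defining $\Omega(B)$ is disjoint (so that $\#\Omega(B)\ge\#\Omega(\a_i,B)$ for each $i$, which is what makes the per-class bounds usable), and that $\#\Omega(\a_i,B)>0$ for $B$ large enough so that the ratios are well-defined in the limit --- this holds because each $\Omega(\a_i,B)$ is eventually nonempty (indeed grows like $B^{N+1}$ by \eqref{schanuel_class_size}). If one wanted to avoid even worrying about division by zero for small $B$, one could simply note that the $\limsup$ only sees large $B$. The whole argument is a two-line manipulation once Proposition~\ref{class_search_space_size} is in hand; the real content of the efficiency analysis lies in that proposition and in Proposition~\ref{ideal_lattice_count}, not in the passage to the full search space.
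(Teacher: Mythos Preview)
Your proof is correct and follows exactly the same approach as the paper: bound $\#\calS(B)$ by the sum of the $\#\calS(\a_i,B)$, use $\#\Omega(B)\ge\#\Omega(\a_i,B)$ to pass to the per-class ratios, and invoke Proposition~\ref{class_search_space_size} for each $i$. The paper's version is terser but the argument is identical; your additional remarks on the subadditivity of $\limsup$ and the eventual positivity of $\#\Omega(\a_i,B)$ are reasonable points of care that the paper leaves implicit.
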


\begin{proof}
From the definitions it follows that
\[\frac{\#\calS(B)}{\#\Omega(B)}\le\sum_{i=1}^h\frac{\#\calS(\a_i,B)}{\#\Omega(B)}\le\sum_{i=1}^h\frac{\#\calS(\a_i,B)}{\#\Omega(\a_i,B)}.\]
The theorem is then a consequence of Proposition \ref{class_search_space_size}.
\end{proof}

\section{Sample Computations}\label{computations_section}

The main algorithms of this paper, namely M1 and M2 (Algorithms \ref{M1_alg} and \ref{M2_alg}), have been implemented using \texttt{Sage} \cite{sage}. We give below a series of computations of points of small height in projective spaces using these algorithms. Since our methods apply in particular to the space $\P^1$, and listing points of bounded height in $\P^1$ is equivalent to listing elements of $K$ with bounded height, we also compare our methods to the algorithm developed in \cite{doyle-krumm} for finding elements of small height in number fields. All computations have been done on a MacBook Pro with a 2.7 GHz processor and 16 GB of memory.

\subsection{Comparison of M1 and M2}\label{M1vsM2_section}
The three tables shown below summarize the results of computations of points of bounded height in $\P^N(K)$ for three number fields $K$ and for $N=1,2,3$. In all cases, the height bound $B$ was taken to be 20. The purpose of these examples is to compare the running times of M1 and M2 when applied over number fields of various degrees and projective spaces of various dimensions. The results suggest that M2 is a significantly better method than M1, and indeed we have not found any examples -- among many computations --  where M1 performs better than M2. Theoretically, M1 has an advantage over M2 in that it does not require arithmetic operations with fundamental units; however, this on its own does not seem to make M1 a faster method. The difference in running times is largely due to the fact that M2 generates all points in $\Omega(B)$ starting from the set $\calC(\a,B)/\mu_K$ rather than the larger set $\calC(\a,B)/\{\pm 1\}$.

\begin{table}[!htb]
\centering
\begin{tabular}{|c|c|c|c|}
\hline
$N$ & M1 time & M2 time & Points found\\
\hline
1 &  0.98 s & 0.26 s & 504\\
\hline
2 & 39 s & 10 s & 20,401\\
\hline
3 & 2,465 s & 513 s & 607,344 \\
\hline
\end{tabular}

\medskip
\caption{Computations over the field $K=\Q(\sqrt {17})$}
\end{table}

\begin{table}[!htb]
\centering
\begin{tabular}{|c|c|c|c|}
\hline
$N$ & M1 time & M2 time & Points found\\
\hline
1 & 0.43 s & 0.27 s & 452\\
\hline
2 & 25 s & 14 s & 23,725\\
\hline
3 & 2,707 & 889 s & 888,872 \\
\hline
\end{tabular}

\medskip
\caption{Computations over the field $\Q(\sqrt[3]{2})$}
\end{table}

\begin{table}[!htb]
\centering
\begin{tabular}{|c|c|c|c|}
\hline
$N$ & M1 time & M2 time & Points found\\
\hline
1 & 11 s & 0.41 s & 842\\
\hline
2 & 10,483 s & 51 s & 72,091\\
\hline
3 & - & 10,407 s & 4,926,644 \\
\hline
\end{tabular}

\medskip
\caption{Computations over the field $\Q(\sqrt[4]{-1})$}
\end{table}

\subsection{Elements of bounded height in number fields}\label{P1_section}
 Let $K$ be a number field and let $x\in K$. The height of $x$ is defined to be the number $H_K(x)=H_K(P)$, where $P=[x,1]\in\P^1(K)$. The problem of listing all elements of $K$ up to a given height bound was studied in the article \cite{doyle-krumm}, where an algorithm -- denoted here by DK -- was developed to solve this problem. Since the methods M1 and M2 can be applied to list all elements of bounded height in $\P^1(K)$, it is natural to wonder which algorithm (M1, M2, or DK) is fastest in this context. Given that M1 appears to be generally slower than M2, we will compare only M2 and DK.
 
The four tables shown below list the running times for several computations done with M2 and DK over quadratic, cubic, and quartic number fields, and with several different height bounds $B$. In the tables, the underlying number field $K$ is either given explicitly as an extension of $\Q$, or a defining polynomial for it is given in the variable $x$. We will denote by $g$ a root of the defining polynomial of $K$, so that $K=\Q(g)$.

As indicated by these computations, neither algorithm is always better than the other, at least in their current implementations. There are examples where M2 is significantly faster than DK (see tables 4 and 5), and there are cases where the opposite happens (see tables 6 and 7). This phenomenon appears to be tied to the size of the fundamental units chosen for $K$: when the units are relatively small, DK performs better than M2, but when the units are fairly large, M2 is faster.
The fields $K$ used for the computations in the tables below illustrate the effect of the size of the fundamental units on DK and M2. All of these fields have unit rank 1. Fundamental units for the fields in Tables 4 and 5, where M2 performs best, are given by
\[10771703481902106796084652\cdot g - 1196823028442576899590849641\]
and
\[17597170123512678762361\cdot g^2 + 1494282241689424625747666\cdot g + 7084465262325346055314439,\] 
respectively. In contrast, fundamental units for the fields in Tables 6 and 7, where DK performs best, are given by
\[28g + 295 \hspace{5mm}\text{and}\hspace{5mm} g^3 + 2g^2 + 2g - 2.\]

\begin{table}[!htb]
\centering
\begin{tabular}{|c|c|c|c|}
\hline
$B$ & DK time & M2 time & Elements found\\
\hline
100 & 1.25 s & 0.76 s & 479\\
\hline
1,000 & 92 s & 40 s & 73,007\\
\hline
5,000 & 3,358 s & 1,299 s & 1,826,367 \\
\hline
\end{tabular}

\medskip
\caption{Elements of bounded height in $\Q(\sqrt{12345})$}
\end{table}

\begin{table}[!htb]
\centering
\begin{tabular}{|c|c|c|c|}
\hline
$B$ & DK time & M2 time & Elements found\\
\hline
100 & 0.82 s & 0.51 s & 263\\
\hline
1,000 & 55 s & 19 s & 27,603\\
\hline
5,000 & 1,682 s & 475 s & 731,755 \\
\hline
\end{tabular}

\medskip
\caption{Elements of bounded height in $K:x^3-x+123$}
\end{table} 

\begin{table}[!htb]
\centering
\begin{tabular}{|c|c|c|c|}
\hline
$B$ & DK time & M2 time & Elements found\\
\hline
100 & 1.32 s & 1.97 s & 2,875\\
\hline
1,000 & 49 s & 96 s & 275,615\\
\hline
5,000 & 1,402 s & 3,019 s & 6,795,587 \\
\hline
\end{tabular}

\medskip
\caption{Elements of bounded height in $\Q(\sqrt{111})$}
\end{table} 

\begin{table}[!htb]
\centering
\begin{tabular}{|c|c|c|c|}
\hline
$B$ & DK time & M2 time & Elements found\\
\hline
100 & 0.69 s & 0.82 s & 299\\
\hline
1,000 & 13 s & 23 s & 42,067\\
\hline
5,000 & 202 s & 431 s & 1,092,203 \\
\hline
\end{tabular}

\medskip
\caption{Elements of bounded height in $K: x^4-x+11$}
\end{table}

\subsection*{Acknowledgements} I thank John Doyle and Lenny Fukshansky for help finding references, and the referees for their careful reading of the manuscript and for several helpful comments.
 
\bigskip

\bibliography{mybibliography}{}

\providecommand{\bysame}{\leavevmode\hbox to3em{\hrulefill}\thinspace}
\providecommand{\MR}{\relax\ifhmode\unskip\space\fi MR }
\providecommand{\MRhref}[2]{%
  \href{http://www.ams.org/mathscinet-getitem?mr=#1}{#2}
}
\providecommand{\href}[2]{#2}
\begin{thebibliography}{10}

\bibitem{latte_manual}
V.~Baldoni et~al., \emph{A user's guide for \texttt{LattE integrale} v1.6,
  2013}, Available online at \url{http://www.math.ucdavis.edu/~latte/}.

\bibitem{barvinok-pommersheim}
Alexander Barvinok and James~E. Pommersheim, \emph{An algorithmic theory of
  lattice points in polyhedra}, New perspectives in algebraic combinatorics
  ({B}erkeley, {CA}, 1996--97), Math. Sci. Res. Inst. Publ., vol.~38, Cambridge
  Univ. Press, 1999, pp.~91--147.

\bibitem{barvinok}
Alexander~I. Barvinok, \emph{A polynomial time algorithm for counting integral
  points in polyhedra when the dimension is fixed}, Mathematics of Operations
  Research \textbf{19} (1994), no.~4, 769--779.

\bibitem{biasse}
Jean-Fran{\c{c}}ois Biasse, \emph{Improvements in the computation of ideal
  class groups of imaginary quadratic number fields}, Adv. Math. Commun.
  \textbf{4} (2010), no.~2, 141--154.

\bibitem{buchmann}
Johannes Buchmann, \emph{A subexponential algorithm for the determination of
  class groups and regulators of algebraic number fields}, S\'eminaire de
  {T}h\'eorie des {N}ombres, {P}aris 1988--1989, Progr. Math., vol.~91,
  Birkh\"auser Boston, 1990, pp.~27--41.

\bibitem{cohen}
Henri Cohen, \emph{A course in computational algebraic number theory}, Graduate
  Texts in Mathematics, vol. 138, Springer-Verlag, 1993.

\bibitem{latte}
Jes{\'u}s~A. De~Loera, Raymond Hemmecke, Jeremiah Tauzer, and Ruriko Yoshida,
  \emph{Effective lattice point counting in rational convex polytopes}, J.
  Symbolic Comput. \textbf{38} (2004), no.~4, 1273--1302.

\bibitem{dembele-kumar}
Lassina Demb\'el\'e and Abhinav Kumar, \emph{Examples of abelian surfaces with
  everywhere good reduction}, Preprint, \url{http://arxiv.org/abs/1309.3821}.

\bibitem{jxd}
John~R. Doyle, Xander Faber, and David Krumm, \emph{Preperiodic points for
  quadratic polynomials over quadratic fields}, New York J. Math. \textbf{20}
  (2014), 507--605.

\bibitem{doyle-krumm}
John~R. Doyle and David Krumm, \emph{Computing algebraic numbers of bounded
  height}, Math. Comp. (To appear).

\bibitem{fincke-pohst1}
U.~Fincke and M.~Pohst, \emph{A procedure for determining algebraic integers of
  given norm}, Computer algebra ({L}ondon, 1983), Lecture Notes in Comput.
  Sci., vol. 162, Springer, 1983, pp.~194--202.

\bibitem{fincke-pohst2}
\bysame, \emph{Improved methods for calculating vectors of short length in a
  lattice, including a complexity analysis}, Math. Comp. \textbf{44} (1985),
  no.~170, 463--471.

\bibitem{hafner-mccurley}
James~L. Hafner and Kevin~S. McCurley, \emph{A rigorous subexponential
  algorithm for computation of class groups}, J. Amer. Math. Soc. \textbf{2}
  (1989), no.~4, 837--850.

\bibitem{jacobson}
Michael~J. Jacobson, Jr., \emph{Applying sieving to the computation of
  quadratic class groups}, Math. Comp. \textbf{68} (1999), no.~226, 859--867.

\bibitem{lang_dg}
Serge Lang, \emph{Fundamentals of {D}iophantine geometry}, Springer-Verlag, New
  York, 1983.

\bibitem{lang_nt}
\bysame, \emph{Algebraic number theory}, second ed., Graduate Texts in
  Mathematics, vol. 110, Springer-Verlag, 1994.

\bibitem{lll}
A.~K. Lenstra, H.~W. Lenstra, Jr., and L.~Lov{\'a}sz, \emph{Factoring
  polynomials with rational coefficients}, Math. Ann. \textbf{261} (1982),
  no.~4, 515--534.

\bibitem{lenstra}
H.~W. Lenstra, Jr., \emph{Algorithms in algebraic number theory}, Bull. Amer.
  Math. Soc. (N.S.) \textbf{26} (1992), no.~2, 211--244.

\bibitem{nguyen-stehle}
Phong~Q. Nguyen and Damien Stehl{\'e}, \emph{An {LLL} algorithm with quadratic
  complexity}, SIAM J. Comput. \textbf{39} (2009), no.~3, 874--903.

\bibitem{petho-schmitt}
A.~Peth{\H{o}} and S.~Schmitt, \emph{Elements with bounded height in number
  fields}, Period. Math. Hungar. \textbf{43} (2001), no.~1-2, 31--41.

\bibitem{pohst-z}
M.~Pohst and H.~Zassenhaus, \emph{Algorithmic algebraic number theory},
  Encyclopedia of Mathematics and its Applications, vol.~30, Cambridge
  University Press, 1997.

\bibitem{schanuel}
Stephen~Hoel Schanuel, \emph{Heights in number fields}, Bull. Soc. Math. France
  \textbf{107} (1979), no.~4, 433--449.

\bibitem{silverman}
Joseph~H. Silverman, \emph{The arithmetic of dynamical systems}, Graduate Texts
  in Mathematics, vol. 241, Springer, 2007.

\bibitem{sage}
W.\thinspace{}A. Stein et~al., \emph{{S}age {M}athematics {S}oftware ({V}ersion
  5.9)}, The Sage Development Team, 2013, \url{http://www.sagemath.org}.

\bibitem{stoll_genus3}
Michael Stoll, \emph{An explicit theory of heights for hyperelliptic jacobians
  of genus three}, Preprint.

\bibitem{stoll_height_const}
\bysame, \emph{On the height constant for curves of genus two}, Acta Arith.
  \textbf{90} (1999), no.~2, 183--201.

\bibitem{stoll_height_const2}
\bysame, \emph{On the height constant for curves of genus two. {II}}, Acta
  Arith. \textbf{104} (2002), no.~2, 165--182.

\bibitem{widmer_bdd_height}
Martin Widmer, \emph{Counting primitive points of bounded height}, Trans. Amer.
  Math. Soc. \textbf{362} (2010), no.~9, 4793--4829.

\bibitem{widmer_lattice_pts}
\bysame, \emph{Lipschitz class, narrow class, and counting lattice points},
  Proc. Amer. Math. Soc. \textbf{140} (2012), no.~2, 677--689.

\end{thebibliography}
\bibliographystyle{amsplain}
\end{document}